\newtheorem{theorem}{Theorem}
\newtheorem{lemma}{Lemma}[section]
\theoremstyle{definition}
\newtheorem{definition}[lemma]{Definition}
\theoremstyle{remark}
\newtheorem{remark}[lemma]{Remark}
\newcommand{\lt}{\left}
\newcommand{\rt}{\right}
\newcommand{\fr}{\frac}
\newcommand{\bbE}{{\mathbb{E}}}
\newcommand{\bbN}{{\mathbb{N}}}
\newcommand{\bbP}{{\mathbb{P}}}
\newcommand{\bbR}{{\mathbb{R}}}
\newcommand{\bbS}{{\mathbb{S}}}
\newcommand{\bbZ}{{\mathbb{Z}}}
\newcommand{\cN}{{\mathcal{N}}}
\newcommand{\cK}{{\mathcal{K}}}
\newcommand{\la}{\langle}
\newcommand{\ra}{\rangle}
\newcommand{\norm}[1]{{\lt\|#1\rt\|}}
\newcommand{\diag}{{\mathrm{diag}}}
\newcommand{\tr}{{\mathsf{tr}}}
\newcommand{\TV}{{\mathsf{TV}}}
\newcommand{\KL}{{\mathsf{KL}}}
\newcommand{\Leb}{{\mathsf{Leb}}}
\newcommand{\ind}{{\mathbbm{1}}}
\newcommand{\tX}{{\widetilde X}}
\newcommand{\unif}{{\mathrm{unif}}}
\newcommand{\de}{{\mathrm{d}}}
\DeclareMathOperator*{\EE}{{\bbE}}
\DeclareMathOperator*{\PP}{{\bbP}}
\author{
	Matthew Brennan\thanks{Department of Electrical Engineering and Computer Science, Massachusetts Institute of Technology}
	\and
	Guy Bresler\thanks{\texttt{guy@mit.edu}, Department of Electrical Engineering and Computer Science, Massachusetts Institute of Technology}
	\and
	Brice Huang\thanks{\texttt{bmhuang@mit.edu}, Department of Electrical Engineering and Computer Science, Massachusetts Institute of Technology}
}
\date{June 29, 2022}
\title{Threshold for Detecting High Dimensional Geometry \\ in Anisotropic Random Geometric Graphs}
\keywords{Random geometric graph, Erd\H{o}s-R\'enyi graph, high dimensional geometry, Wishart matrix, hypothesis testing.}
\begin{document}

\maketitle

\begin{abstract}
	In the anisotropic random geometric graph model, vertices correspond to points drawn from a high-dimensional Gaussian distribution and two vertices are connected if their distance is smaller than a specified threshold.
	We study when it is possible to hypothesis test between such a graph and an Erd\H{o}s-R\'enyi graph with the same edge probability.
	If $n$ is the number of vertices and $\alpha$ is the vector of eigenvalues, \cite{eldan2020information} shows that detection is possible when $n^3 \gg (\norm{\alpha}_2/\norm{\alpha}_3)^6$ and impossible when $n^3 \ll (\norm{\alpha}_2/\norm{\alpha}_4)^4$.
	We show detection is impossible when $n^3 \ll (\norm{\alpha}_2/\norm{\alpha}_3)^6$, closing this gap and affirmatively resolving the conjecture of \cite{eldan2020information}.
\end{abstract}

\section{Introduction}

Extracting information from large graphs is a fundamental statistical task.
Because many natural networks have underlying metric structure -- for example, nearby proteins in a biological network are more likely to share function, and users with similar interests in a social network are more likely to interact -- a central inference problem is to infer latent geometric structure in an observed graph.
Moreover, with the proliferation of large data sets in the modern world, statistical inference is inherently high dimensional, see e.g. the survey \cite{johnstone2006high}.
This motivates the study of inferring latent high dimensional geometry in a graph.

In this paper, we consider the hypothesis testing problem that determines if such inference is information-theoretically possible.
This continues a line of work originated by Bubeck, Ding, Eldan, and R\'acz \cite{bubeck2016testing} and continued by Eldan and Mikulincer \cite{eldan2020information}.
Our main contribution is a tight characterization of when detection is possible in the anisotropic setting introduced in \cite{eldan2020information}.

Formally, given a graph $G$ on $[n]$, we wish to test between two hypotheses.
The null hypothesis is that $G$ is a sample from the Erd\H{o}s-R\'enyi graph $G(n,p)$, where each edge is present with independent probability $p$.
The alternative hypothesis is that $G$ is a sample from a random geometric graph (RGG), which we define precisely below.
In such graphs, each vertex corresponds to a random point in some metric space and an edge exists between two vertices if their distance is smaller than a given threshold.

Arguably the most natural RGG is the isotropic model: each vertex $i\in [n]$ corresponds to an independent latent vector $X_i$ sampled from the Haar measure on the sphere $\bbS^{d-1}$ or an isotropic $d$-dimensional Gaussian,\footnote{These models have the same threshold for the detection task we consider.} and edge $(i,j)$ is present if $\la X_i,X_j\ra \ge t_{p,d}$, where $t_{p,d}$ is chosen so that each edge is present with probability $p$.
Let $G(n,p,d)$ denote the isotropic RGG with spherical latent data; we fix $p\in (0,1)$ and allow $d$ to vary with $n$.
The following seminal result of Bubeck, Ding, Eldan, and R\'acz characterizes, for fixed $p\in (0,1)$, when it is possible to test between $G(n,p)$ and $G(n,p,d)$.
Let $\TV$ denote total variation distance.
\begin{theorem}\cite{bubeck2016testing}
	\label{thm:bder}
	Let $p\in (0,1)$ be fixed.
	\begin{enumerate}[label=(\alph*),ref=\alph*]
		\item \label{itm:bder-lb} If $n^3 \ll d$, then $\TV(G(n,p), G(n,p,d)) \to 0$.
		\item \label{itm:bder-ub} If $n^3 \gg d$, then $\TV(G(n,p), G(n,p,d)) \to 1$.
	\end{enumerate}
\end{theorem}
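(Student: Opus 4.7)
The plan is to prove the two directions separately. For part~(\ref{itm:bder-ub}) (detection when $n^3 \gg d$), I exhibit a low-degree polynomial statistic that separates the two models. For part~(\ref{itm:bder-lb}) (indistinguishability when $n^3 \ll d$), I reduce via the data processing inequality to a comparison of a Wishart matrix with a GOE matrix.

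For part~(\ref{itm:bder-ub}), consider the signed triangle count
\[
\tau(G) = \sum_{1 \le i < j < k \le n} (\ind_{ij} - p)(\ind_{jk} - p)(\ind_{ik} - p),
\]
where $\ind_{ij}$ is the indicator that $\{i,j\}$ is an edge. Under $G(n,p)$ the edges are independent, so $\bbE \tau = 0$ and a direct second moment calculation gives $\mathrm{Var}(\tau) = \Theta(n^3)$: same-triangle pairs contribute $\binom{n}{3}(p(1-p))^3$, while pairs sharing one edge or none contribute $0$ by centering. Under $G(n,p,d)$, the three edge indicators of a triangle are correlated through the latent vectors $X_i,X_j,X_k$. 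Writing $Z_{ij} = \sqrt d\,\la X_i,X_j\ra$, rotational symmetry gives $\bbE[Z_{ij}Z_{jk}Z_{ik}] = \Theta(1/\sqrt d)$ (via $\bbE[X_i X_i^T] \propto I_d$), and a Hermite/Taylor expansion of $\ind[Z \ge t_{p,d}]$ translates this into $\bbE[\ind_{ij}\ind_{jk}\ind_{ik}] - p^3 = \Theta(1/\sqrt d)$. Summing yields $\bbE_{\mathrm{RGG}}\tau = \Theta(n^3/\sqrt d)$, while a more involved but analogous computation shows $\mathrm{Var}(\tau) = O(n^3)$ to leading order in the detection regime. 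When $n^3 \gg d$, the signal-to-noise ratio $\Theta(n^{3/2}/\sqrt d)$ diverges, and a Chebyshev-based test on $\tau$ separates the two distributions, so $\TV \to 1$.

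For part~(\ref{itm:bder-lb}), realize $G(n,p,d)$ (in the Gaussian latent model, which is equivalent for this test) as a deterministic thresholding of the off-diagonal entries of $M = XX^T$ at level $t_{p,d}$, where $X \in \bbR^{n\times d}$ has i.i.d.\ standard Gaussian rows. Likewise, $G(n,p)$ is obtained by applying the same thresholding to a symmetric matrix $H$ with independent Gaussian off-diagonal entries matching the marginal variance of $M_{ij}$. The data processing inequality then yields
\[
\TV(G(n,p), G(n,p,d)) \le \TV\!\lt(\fr{M - d\,I}{\sqrt d},\ H\rt),
\]
where the right-hand side compares a centered and rescaled Wishart matrix with GOE. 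The key input is the Wishart-to-GOE theorem, asserting that this total variation vanishes when $n^3/d \to 0$. Its proof bounds $\KL$ (or $\chi^2$) between the explicit Wishart density $\propto \det(M)^{(d-n-1)/2}\exp(-\tr(M)/2)$ and the GOE density via the expansion of $\log\det(I + A)$ around $A = 0$, showing that the cubic term is of order $n^3/d$ and that higher-order terms are tame.

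The main obstacle is the Wishart-to-GOE comparison: the cubic contribution to the log-determinant expansion must be identified precisely, and the remainders must be controlled uniformly even though the spectrum of $M/d$, while concentrated near $I$, fluctuates at scale $\sqrt{n/d}$ which is not small in the whole regime of interest. The signed-triangle direction, though combinatorially delicate, ultimately reduces to routine moment bookkeeping.
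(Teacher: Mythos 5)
Your proposal correctly reconstructs the argument of \cite{bubeck2016testing}, which the present paper only cites rather than proves: a signed-triangle second-moment test for part~(\ref{itm:bder-ub}), and for part~(\ref{itm:bder-lb}) a thresholding reduction plus data processing down to the Wishart-vs-GOE total variation bound, itself established via explicit density / $\KL$ comparison. This matches the original route.

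One step as written needs patching. You describe $G(n,p)$ as ``the same thresholding'' of the Gaussian matrix $H$ at level $t_{p,d}$, but that is not a valid realization of $G(n,p)$: the marginal law of a Wishart entry is not exactly Gaussian, so the Gaussian level achieving edge probability $p$ differs from $t_{p,d}$. The data processing inequality gives $\TV\bigl(G(n,p,d),\, \ind\{H \ge t_{p,d}\}\bigr) \le \TV(\text{Wishart}, \text{GOE})$, and one must separately bound the additional error $\TV\bigl(\ind\{H \ge t_{p,d}\},\, G(n,p)\bigr)$, which reduces to a Berry--Esseen estimate on the quantile mismatch (the present paper handles exactly this via \cite[Lemma 16]{eldan2020information}). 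A second, minor point: under $G(n,p,d)$ the signed-triangle variance is $O(n^3 + n^4/d + n^5/d^2)$, not $O(n^3)$ throughout $d \ll n^3$ — the extra terms dominate when $d \lesssim n$ — but the signal-to-noise ratio $\Theta(n^6/d)$ divided by each of these still diverges, so your conclusion stands.
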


Each coordinate of the latent vector $X_i$ represents an attribute of vertex $i$.
The isotropic model assumes that each attribute has the same influence on the network structure.
In real networks, some attributes are more important than others: for example, in a social network geographic location has a stronger influence on connectivity than preference of ice cream flavor.
This motivates the following anisotropic generalization of the RGG, introduced in \cite{eldan2020information}, in which attributes may have different weights.
\begin{definition}[Anisotropic random geometric graph]
	\label{defn:rgg}
	For $n\in \bbN$, $p\in (0,1)$, $\alpha \in \bbR_{\ge 0}^d$, let $G(n,p,\alpha)$ be the following random graph model.
	Generate $X_1,\ldots,X_n$ i.i.d. from $\cN(0, D_\alpha)$ where $D_\alpha$ is the diagonal matrix with diagonal $\alpha$.
	Let $t_{p,\alpha} \in \bbR$ be the unique number satisfying $\PP(\la X_1,X_2\ra \ge t_{p,\alpha}) = p$.
	Then, $G \sim G(n,p,\alpha)$ is a graph on $[n]=\{1,\ldots,n\}$ where $(i,j)$ is an edge if and only if $\la X_i,X_j\ra \ge t_{p,\alpha}$.
\end{definition}
By rotational invariance of the model, the assumption that $X_i$ has diagonal covariance is without loss of generality.
Thus, all our results apply to the case of latent Gaussian vectors with arbitrary covariance.

Throughout, we fix $p\in (0,1)$ and allow $d,\alpha$ to vary with $n$.
The central question we study is, under what limiting behaviors of $(n,d,\alpha)$ can one statistically distinguish $G(n,p,\alpha)$ from $G(n,p)$?
This question was first studied in \cite{eldan2020information}, in which the following upper and lower bounds on detection were derived.
\begin{theorem}{\cite[Theorem 2]{eldan2020information}}
	\label{thm:eldan-rgg}
	Let $p\in (0,1)$ be fixed. Then,
	\begin{enumerate}[label=(\alph*),ref=\alph*]
		\item \label{itm:rgg-lb} If $n^3 \ll (\norm{\alpha}_2/\norm{\alpha}_4)^4$, then $\TV(G(n,p), G(n,p,\alpha)) \to 0$.
		\item \label{itm:rgg-ub} If $n^3 \gg (\norm{\alpha}_2/\norm{\alpha}_3)^6$, then $\TV(G(n,p), G(n,p,\alpha)) \to 1$.
	\end{enumerate}
\end{theorem}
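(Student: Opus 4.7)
The two directions require complementary techniques: part (a) proceeds by a Gaussian coupling and data-processing upper bound on $\TV$, while part (b) exhibits an explicit distinguisher (the signed triangle count).

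\paragraph{Part (a): impossibility under $n^3 \ll (\norm{\alpha}_2/\norm{\alpha}_4)^4$.}
Let $W$ be the symmetric $n \times n$ matrix with off-diagonal entries $W_{ij} = \la X_i, X_j \ra$; then $G(n,p,\alpha)$ is a deterministic function of $W$ (thresholding at $t_{p,\alpha}$). Let $M$ be a symmetric matrix with off-diagonal entries $M_{ij}$ i.i.d.\ $\cN(0, \norm{\alpha}_2^2)$, so thresholding $M$ at $t_{p,\alpha}$ produces $G(n,p')$ with $p' = p$ up to a vanishing CLT correction (absorbed by a slight perturbation of the target threshold). By the data processing inequality,
$$
\TV(G(n,p,\alpha), G(n,p)) \leq \TV(W, M),
$$
so it suffices to show $\TV(W, M) \to 0$; I would do this via Pinsker, bounding $\KL(W\,\|\,M)$. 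The obstacle is that entries of $W$ are not independent: $W_{ij}$ and $W_{ik}$ share $X_i$, so a per-entry Gaussianization will not suffice. I would carry out a \emph{joint} Gaussianization in one of two ways: (i) a Hermite/Wiener-chaos expansion of the density of $W$ in the underlying i.i.d.\ Gaussians $Y_{ik} = X_i(k)/\sqrt{\alpha_k}$, or (ii) a Lindeberg-type swap where the contribution $\alpha_k Y_{ik} Y_{jk}$ of latent coordinate $k$ is replaced, simultaneously across all entries, by a jointly Gaussian surrogate of matched covariance. The per-coordinate swap error is governed by the fourth cumulant of $\alpha_k Y_{ik}Y_{jk}$, of order $\alpha_k^4$. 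Summing over the $d$ latent coordinates while tracking the combinatorial factor of $n$ arising from shared vertices (the same factor producing $n^3$ rather than $n^2$ in Theorem~\ref{thm:bder}) should yield
$$
\KL(W \,\|\, M) \lesssim \frac{n^3 \norm{\alpha}_4^4}{\norm{\alpha}_2^4},
$$
which is $o(1)$ by hypothesis. Specializing $\alpha = \mathbf{1}_d$ gives $\KL \lesssim n^3/d$, consistent with Theorem~\ref{thm:bder}. The chief difficulty is closing the marginal Gaussian-approximation bound into a joint KL bound scaling as $n^3$ times the per-coordinate cumulant, rather than $n^4$ or worse: one must carefully account for multi-entry dependencies.

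\paragraph{Part (b): detection under $n^3 \gg (\norm{\alpha}_2/\norm{\alpha}_3)^6$.}
Use the signed-triangle statistic
$$
T(G) = \sum_{i<j<k} (\ind_{(i,j) \in E} - p)(\ind_{(j,k) \in E} - p)(\ind_{(i,k) \in E} - p),
$$
the distinguisher from \cite{bubeck2016testing}. Under $G(n,p)$ the centered edge indicators are independent, so $\EE_0[T] = 0$ and only identical triangles contribute to the variance: $\mathrm{Var}_0(T) = \binom{n}{3}(p(1-p))^3 = \Theta(n^3)$. Under $G(n,p,\alpha)$, a Hermite expansion of $\ind_{(i,j) \in E} - p$ in the approximately Gaussian normalized inner product $\la X_i, X_j\ra / \norm{\alpha}_2$ gives, at leading (linear) order,
$$
\EE_\alpha[T] \asymp n^3 \cdot (\norm{\alpha}_3/\norm{\alpha}_2)^3,
$$
using $\EE[\la X_i, X_j \ra \la X_j, X_k \ra \la X_i, X_k \ra] = \sum_\ell \alpha_\ell^3 = \norm{\alpha}_3^3$ (with the appropriate $p$-dependent cube of the first Hermite coefficient). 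A second-moment computation—bounding covariances between triangles sharing a vertex or an edge via conditioning on the shared latent vectors and Hermite-expanding the conditional expectations—controls $\mathrm{Var}_\alpha(T)$ well enough that $T$ concentrates around $\EE_\alpha[T]$. A threshold test at level $\tfrac12\EE_\alpha[T]$ then distinguishes the two hypotheses provided $\EE_\alpha[T] \gg \sqrt{\mathrm{Var}_0(T)} = \Theta(n^{3/2})$, i.e.,
$$
n^3 (\norm{\alpha}_3/\norm{\alpha}_2)^3 \gg n^{3/2}, \qquad \text{equivalently,} \qquad n^3 \gg (\norm{\alpha}_2/\norm{\alpha}_3)^6,
$$
which is the hypothesis of (b); this yields $\TV \to 1$. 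The main obstacle here is the alternative-side variance bound: contributions from triangle pairs sharing a vertex (no edge) or an edge must be shown to be compatible with the SNR criterion above, which requires careful conditional Hermite analysis of multi-triangle moments.
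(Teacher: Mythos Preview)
This theorem is a cited result from \cite{eldan2020information}; the present paper does not prove it in full but does indicate the ingredients. Your high-level plan matches those ingredients well, with one substantive difference in part (a).

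For part (b), your approach is exactly what the paper reports: the signed triangles statistic $\theta(G)$ is the distinguisher, and the moment computation you outline (mean $\asymp n^3\norm{\alpha}_3^3/\norm{\alpha}_2^3$ under the alternative versus standard deviation $\asymp n^{3/2}$ under the null) is the right one.

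For part (a), your reduction to $\TV(W,M)$ via data processing and then to a $\KL$ bound is the same framework the paper describes; indeed this is precisely Theorem~\ref{thm:eldan-wishart} plus the thresholding step in the proof of Theorem~\ref{thm:main-rgg}. Two remarks. First, the threshold mismatch you wave off as a ``vanishing CLT correction'' is handled more carefully in the paper by splitting into two threshold maps $H_{p,\alpha}$ and $K_p$, applying the triangle inequality, and invoking a separate lemma (\cite[Lemma 16]{eldan2020information}) to control $\TV(H_{p,\alpha}(M(n)),K_p(M(n)))$. Second, and more importantly, the $\KL$ bound itself is \emph{not} obtained in \cite{eldan2020information} by a Lindeberg swap or Hermite expansion as you suggest, but by an entropy chain rule argument (the paper explicitly notes this in the Remark following the proof of Theorem~\ref{thm:main-wishart}). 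You correctly flag the ``chief difficulty'' of getting $n^3$ rather than $n^4$, but your proposed mechanisms do not resolve it: a coordinate-wise Lindeberg replacement does not obviously produce the needed cancellations across entries sharing a vertex. The entropy chain rule, by contrast, decomposes $\KL$ row by row, conditioning on previous rows, and the conditional one-row $\KL$ is what produces the correct $n^2$ per-row cost (summing to $n^3$). So your plan for (a) has the right skeleton but the crucial step is left as a stated difficulty whose actual resolution uses a different technique than the ones you propose.
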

When $\alpha = 1^d$, this recovers the sharp $d\asymp n^3$ detection threshold in the isotropic model.
However, for general $\alpha$ there is a polynomially sized gap between the upper and lower bounds of Theorem~\ref{thm:eldan-rgg}.
For example, if $\alpha_i = i^{-1/3}$, $(\norm{\alpha}_2/\norm{\alpha}_4)^4 \asymp d^{2/3}$ while $(\norm{\alpha}_2/\norm{\alpha}_3)^6 \asymp d$.
\cite[Conjecture 1]{eldan2020information} conjectures that the hypothesis of part (\ref{itm:rgg-lb}) can be weakened to $n^3 \ll (\norm{\alpha}_2/\norm{\alpha}_3)^6$, i.e. the detection threshold is $n^3 \asymp (\norm{\alpha}_2/\norm{\alpha}_3)^6$.

The main result of this paper is to affirmatively resolve this conjecture.
\begin{theorem}[Main result]
	\label{thm:main-rgg}
	If $p\in (0,1)$ is fixed and $n^3 \ll (\norm{\alpha}_2/\norm{\alpha}_3)^6$, then $\TV(G(n,p), G(n,p,\alpha)) \to 1$.
\end{theorem}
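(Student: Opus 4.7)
The plan is to bound the chi-squared divergence between $G(n,p,\alpha)$ and $G(n,p)$ (lifted to the underlying Gram matrix) under the hypothesis $n^3\ll(\|\alpha\|_2/\|\alpha\|_3)^6$. The approach mirrors the information-theoretic template of Bubeck--Ding--Eldan--R\'acz in the isotropic case and sharpens the chi-squared analysis of Eldan--Mikulincer to the triangle-optimal rate, thereby matching the upper bound of Theorem~\ref{thm:eldan-rgg}(\ref{itm:rgg-ub}).

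The first step is to lift from graphs to the Gram matrix. Let $W^{(\alpha)}\in\bbR^{n\times n}$ be the symmetric matrix with $W^{(\alpha)}_{ij}=\la X_i,X_j\ra$ for $i\ne j$, and let $W^*$ be the symmetric matrix with independent off-diagonal Gaussian entries matching the first two moments of $W^{(\alpha)}$; thresholding $W^*$ entrywise at $t_{p,\alpha}$ produces independent Bernoulli$(p)$ edges, i.e.\ $G(n,p)$. By the data processing inequality and the standard chain $2\TV^2\le\chi^2$,
\[
  \TV\bigl(G(n,p),G(n,p,\alpha)\bigr)\;\le\;\TV\bigl(W^{(\alpha)},W^*\bigr)\;\le\;\sqrt{\tfrac{1}{2}\chi^2\bigl(W^{(\alpha)}\,\|\,W^*\bigr)},
\]
so the task reduces to controlling $\chi^2(W^{(\alpha)}\,\|\,W^*)$.

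Expand the likelihood ratio $\de\mu^{(\alpha)}/\de\mu^*$ in the Hermite basis for $W^*$. Each basis element is indexed by a multigraph $H$ on the vertex set $[n]$ with a positive integer weight on each edge (encoding the Hermite degree on the corresponding entry of $W^*$), and the Fourier coefficient is a symmetric polynomial in $\alpha$ expressible as a graph sum over assignments $\phi\colon V(H)\to[d]$ weighted by products of powers of $\alpha_{\phi(v)}$ determined by the degree sequence of $H$. The key term is the triangle multigraph (three vertices, each pair joined by a single edge), whose contribution to $\chi^2$ is
\[
  \binom{n}{3}\,c_p\,\lt(\fr{\|\alpha\|_3}{\|\alpha\|_2}\rt)^{\!6},
\]
which is exactly $o(1)$ under the hypothesis and pinpoints the conjectured threshold. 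This is the Hermite-side counterpart of the signed-triangle statistic of Eldan--Mikulincer.

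The main obstacle is to bound the contributions from all multigraphs other than the triangle and show their total is also $o(1)$. Eldan--Mikulincer's analysis bounded each term using $\|\alpha\|_4$-type moments, losing a polynomial factor relative to the sharp $\|\alpha\|_3$ rate. Recovering the triangle-optimal bound requires a finer case analysis on multigraph topology: (i) tree-shaped motifs reduce by Wick's theorem to an edge normalization factor that is absorbed into the scaling of $W^*$; (ii) single-cycle motifs of length $\ell\ge 3$ are dominated by the triangle via a log-convex power-mean interpolation between $\|\alpha\|_2$ and $\|\alpha\|_3$; and (iii) general multigraphs are decomposed at cut vertices and heavy (multi-)edges into products of tree and cycle motifs, with their contributions summed via a generating-function bound. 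I expect the technical crux to lie in step (iii), where the cut-vertex decomposition creates $\alpha$-dependent correlations between motifs sharing a vertex, and preserving the sharp $\|\alpha\|_3$ factor through these correlations is the delicate point that the existing argument misses.
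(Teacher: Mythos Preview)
Your approach is genuinely different from the paper's and, as you yourself flag, has a real gap at step~(iii).

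The paper does \emph{not} attempt a direct $\chi^2$ or Hermite expansion of $W(n,\alpha)$ against $M(n)$. Instead it splits $\alpha$ into large coordinates $\alpha^+$ and small coordinates $\alpha^-$, each carrying a constant fraction of the $\ell^2$ mass. A two-line inequality (Lemma~\ref{lem:reverse-cs}) shows $(\|\alpha^-\|_2/\|\alpha^-\|_4)^4 \gtrsim (\|\alpha\|_2/\|\alpha\|_3)^6$, so Eldan--Mikulincer's existing $\ell^4$ bound already disposes of $W(n,\alpha^-)$. The coordinates of $\alpha^+$ are then added back one at a time: each step adds an independent rank-one Gaussian spike, and by data processing the $\TV$ increment is at most $\TV(M(n,u_t),M(n))$ for a spiked Gaussian matrix with $u_t=\alpha_t/\|\alpha^t\|_2$. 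This single-spike problem is handled by a \emph{truncated} Ingster--Suslina $\chi^2$ computation (Lemma~\ref{lem:spiked-goe}), giving $\TV\lesssim u_t^3 n^{3/2}$; summing over $t$ produces exactly $n^{3/2}\|\alpha\|_3^3/\|\alpha\|_2^3$. The multigraph combinatorics you propose is entirely bypassed.

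Your direct route has two concrete problems. First, the untruncated $\chi^2(W^{(\alpha)}\,\|\,W^*)$ is a dangerous object: even in the one-spike sub-problem the paper must condition on a high-probability event to make the likelihood ratio square-integrable, and your Hermite series is that same quantity written term by term, with no truncation in sight. Second, and decisively, you name step~(iii) as the crux and then do not carry it out. That step is exactly where Eldan--Mikulincer's analysis degrades to the $\|\alpha\|_4$ rate; a cut-vertex decomposition followed by a ``generating-function bound'' on the pieces will, absent a new idea, reproduce the same loss, because the $\alpha$-correlations at shared vertices are precisely what couple the $\ell^3$ and $\ell^4$ scales. As written, your proposal locates the hard step and leaves it open; the paper's splitting-plus-peeling reduction is the missing idea that sidesteps it.
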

In light of Theorem~\ref{thm:bder}, this result can be interpreted as meaning that for the task of detecting geometry, the effective dimension of the anisotropic RGG is $(\norm{\alpha}_2/\norm{\alpha}_3)^6$.

\cite[Conjecture 1]{eldan2020information} is motivated by the fact that Theorem~\ref{thm:eldan-rgg}(\ref{itm:rgg-ub}) is witnessed by the \emph{signed triangles statistic}
\[
	\theta(G) = \sum_{i<j<k} (G_{i,j}-p)(G_{i,k}-p)(G_{j,k}-p),
\]
which is also the optimal statistic witnessing Theorem~\ref{thm:bder}(\ref{itm:bder-ub}).
A heuristic reason to expect that this statistic is optimal is that it is the lowest-degree nontrivial term in the Fourier expansion of the likelihood ratio $\de G(n,p,\alpha)/\de G(n,p)$.
So, Theorem~\ref{thm:main-rgg} confirms this heuristic and the optimality of the signed triangles statistic in the anisotropic setting.

\subsection{Central Limit Theorem for Anisotropic Wishart Matrices}

Closely related to the anisotropic RGG is the following matrix of inner products generating $G(n,p,\alpha)$.
A sample of $G(n,p,\alpha)$ can be obtained by thresholding each entry of this matrix at $t_{p,\alpha}$.
\begin{definition}[Diagonal-removed anisotropic Wishart matrix]
	\label{defn:anisotropic-wishart}
	Let $W\sim W(n,\alpha)$ be the random $n\times n$ matrix generated as follows.
	Sample $X\in \bbR^{d\times n}$ with i.i.d. columns sampled from $\cN(0, D_\alpha)$, and set $W = \norm{\alpha}_2^{-1} \lt(X^\top X - \diag(X^\top X)\rt)$.
\end{definition}

For fixed $n$, if $d\to \infty$ and $\norm{\alpha}_\infty/\norm{\alpha}_2 \to 0$, by the multidimensional CLT $W(n,\alpha)$ converges to the following matrix of Gaussians.
\begin{definition}
	Let $M\sim M(n)$ be a symmetric random $n\times n$ matrix with zero diagonal and i.i.d. standard Gaussians above the diagonal.
\end{definition}
If we now allow $d, \alpha$ to vary with $n$, a natural question is, for which $(n,d,\alpha)$ can one test between $W(n,\alpha)$ and $M(n)$?
This can be regarded as the random matrix analog of the question of detecting geometry in random graphs.
Eldan and Mikulincer obtain the following detection lower bound.

\begin{theorem}{\cite[Theorem 4]{eldan2020information}}
	\label{thm:eldan-wishart}
	If $n^3 \ll (\norm{\alpha}_2/\norm{\alpha}_4)^4$, then $\KL(W(n,\alpha), M(n)) \to 0$.
\end{theorem}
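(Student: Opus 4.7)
\textbf{Proof proposal for Theorem~\ref{thm:eldan-wishart}.}

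The plan is to bound $\KL(W(n,\alpha) \| M(n)) \le \chi^2(W(n,\alpha) \| M(n))$ and analyze the $\chi^2$-divergence via a Hermite polynomial expansion. The starting point is the decomposition
$$W = \|\alpha\|_2^{-1} \sum_{k=1}^d \alpha_k \lt( Y_k Y_k^\top - \diag(Y_k Y_k^\top) \rt), \qquad Y_1,\ldots,Y_d \overset{\text{i.i.d.}}{\sim} \cN(0, I_n),$$
which exhibits $W$ as a sum of $d$ independent symmetric hollow matrices and makes transparent that each above-diagonal entry has variance $1$, matching the covariance of $M(n)$ exactly. Since $M(n)$ is the standard Gaussian on the $\binom{n}{2}$ above-diagonal coordinates, the products $H_\lambda(W) := \prod_{i<j} H_{\lambda_{ij}}(W_{ij})$ of one-dimensional Hermite polynomials form an orthogonal basis of $L^2(M)$, giving
$$\chi^2(W \| M) = \sum_{\lambda \ne 0} \fr{\EE[H_\lambda(W)]^2}{\lambda!}, \qquad \lambda = (\lambda_{ij})_{i<j} \in \bbN^{\binom{n}{2}}.$$
Note that the hypothesis $n^3 \ll (\|\alpha\|_2/\|\alpha\|_4)^4$ together with Cauchy--Schwarz ($\|\alpha\|_4^4 \ge \|\alpha\|_2^4/d$) forces $d \gg n^3 \ge n$, so $W$ is absolutely continuous with respect to $M$ and the expansion is well-posed.

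Next I would evaluate $\EE[H_\lambda(W)]$ via Wick's formula on the underlying variables $\{Y_{k,i}\}$. Each multi-index $\lambda$ encodes a multi-graph $\Gamma_\lambda$ on $[n]$; after expanding each Hermite factor in monomials and each $W_{ij}$ in its Gaussian summands, the expectation becomes a sum over assignments of a ``level index'' $k \in [d]$ to the edges of $\Gamma_\lambda$, with Wick's formula forcing an even-degree condition at every vertex, at every level. The dominant contribution is the signed triangle $\lambda = \ind_{\{ab,bc,ac\}}$ on distinct $a,b,c$: only the fully diagonal assignment $k_{ab}=k_{bc}=k_{ac}$ survives, giving
$$\EE[W_{ab} W_{bc} W_{ac}] = \|\alpha\|_2^{-3} \sum_{k=1}^d \alpha_k^3 \EE[Y_{ka}^2 Y_{kb}^2 Y_{kc}^2] = \fr{\|\alpha\|_3^3}{\|\alpha\|_2^3}.$$
Summing over $\binom{n}{3}$ triangles accounts for a $\Theta(n^3 (\|\alpha\|_3/\|\alpha\|_2)^6)$ contribution to $\chi^2(W \| M)$.

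The main technical burden is to show no other pattern $\lambda$ dominates. One enumerates $\lambda$ by $\Gamma_\lambda$, classifies by vertex support, edge multiplicity, and cycle structure, and bounds each class by an analogous Wick sum reweighted by an $\alpha$-dependent factor of the form $\|\alpha\|_r^r/\|\alpha\|_2^r$ for some $r \ge 3$ depending on the pattern. In the isotropic case $\alpha = 1^d$ this recovers the $n^3 \asymp d$ threshold of Theorem~\ref{thm:bder}, and the anisotropic refinement preserves the dominance of the triangle, yielding
$$\chi^2(W \| M) \lesssim n^3 \lt(\fr{\|\alpha\|_3}{\|\alpha\|_2}\rt)^6.$$

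Finally, to match the stated $\|\alpha\|_4$ threshold, I would apply H\"older's inequality
$$\|\alpha\|_3^3 = \sum_{k=1}^d \alpha_k \cdot \alpha_k^2 \le \|\alpha\|_2 \|\alpha\|_4^2 \quad\implies\quad \lt(\fr{\|\alpha\|_3}{\|\alpha\|_2}\rt)^6 \le \lt(\fr{\|\alpha\|_4}{\|\alpha\|_2}\rt)^4,$$
so the hypothesis forces $\chi^2(W\|M) \to 0$ and hence $\KL(W(n,\alpha) \| M(n)) \to 0$. (Notably, this approach would directly yield the sharper conclusion under $n^3 \ll (\|\alpha\|_2/\|\alpha\|_3)^6$, anticipating the tight RGG threshold that is the main result of this paper.) I expect the hardest step to be the combinatorial bookkeeping: rigorously ruling out the possibility that dense multi-graphs, or ``star''-type patterns with high Hermite degree concentrated on edges incident to a single vertex---where the expectation reduces to high moments of the quadratic form $\|\alpha\|_2^{-2}\sum_k \alpha_k^2(Y_{k1}^2-1)$---produce contributions that dominate the triangle term.
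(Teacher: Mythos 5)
Your proposal fails at the very first step: $\chi^2(W(n,\alpha), M(n))$ is infinite, so the bound $\KL \le \chi^2$ yields nothing and the Hermite series you write diverges. The culprit is the tails. Each off-diagonal entry $W_{12} = \norm{\alpha}_2^{-1}\sum_{k} \alpha_k Z_k Z_k'$ (with $Z_k, Z_k'$ independent standard Gaussians) has moment generating function $\prod_k(1-\alpha_k^2\theta^2/\norm{\alpha}_2^2)^{-1/2}$, which blows up at finite $\theta$, so its density has exponential rather than sub-Gaussian tails. Against the Gaussian density $\phi$ of $M_{12}$, the integrand $f_{W_{12}}(t)^2/\phi(t)$ grows like $e^{t^2/2 - ct}$ as $t\to\infty$, hence $\chi^2(W_{12}, M_{12}) = \infty$; by data processing the joint $\chi^2$ is infinite for every $n\ge 2$. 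Equivalently, $\de W(n,\alpha)/\de M(n) \notin L^2(M(n))$, so the identity $\sum_\lambda \EE[H_\lambda(W)]^2/\lambda! = 1 + \chi^2$ merely certifies that your series diverges. The ``star''-type patterns you single out at the end as the hardest bookkeeping step are in fact the obstruction: their contributions make the series infinite, not just delicate.

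The actual proof of this theorem in \cite{eldan2020information} avoids $\chi^2$ entirely. As the remark after Theorem~\ref{thm:main-wishart} of the present paper notes, it is a KL chain-rule (entropy tensorization) argument: the Wishart matrix is revealed one latent vector at a time, and the conditional KL of each new row given the previous ones is bounded against the corresponding conditional Gaussian; these conditional divergences sum to $\KL(W(n,\alpha),M(n))$. KL is insensitive to the heavy tails that destroy $\chi^2$, which is what makes this go through. Your instinct toward a $\chi^2$/Ingster--Suslina computation is not wrong in spirit --- the present paper uses exactly that in Lemma~\ref{lem:spiked-goe} --- but only after truncating to a high-probability set and only for the much tamer spiked Gaussian model $M(n,u)$ vs.\ $M(n)$, where the likelihood ratio involves Gaussian densities on both sides. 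Applying unconditioned $\chi^2$ directly to the Wishart-versus-GOE comparison, as you propose, cannot work.
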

Of course, by Pinsker's inequality this also implies $W(n,\alpha)$ and $M(n)$ converge in total variation.
Furthermore, the statistic $\theta(M) = \tr(M^3)$ distinguishes $W(n,\alpha)$ and $M(n)$ to total variation distance $1-o(1)$ when $n^3 \gg (\norm{\alpha}_2/\norm{\alpha}_3)^6$, which can be verified by computing the mean and variance of this statistic under the two hypotheses.

Similarly to above, these upper and lower bounds match for $\alpha = 1^d$, but in general there is a polynomially sized gap between them.
We prove the following result, which identifies the sharp threshold for this detection task by improving the lower bound in Theorem~\ref{thm:eldan-wishart}.
This result can be regarded as a tight CLT for anisotropic Wishart matrices.
\begin{theorem}
	\label{thm:main-wishart}
	If $n^3 \ll (\norm{\alpha}_2/\norm{\alpha}_3)^6$, then $\TV(W(n,\alpha), M(n)) \to 0$.
\end{theorem}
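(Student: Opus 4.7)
My plan is to prove $\KL(W(n,\alpha)\,\|\,M(n)) = o(1)$, from which the total-variation statement follows by Pinsker's inequality. The critical improvement over Theorem~\ref{thm:eldan-wishart} is to organize the KL computation so that the dominant contribution comes from a third-moment (signed-triangle) discrepancy of order $n^3(\norm{\alpha}_3/\norm{\alpha}_2)^6$, rather than the cruder fourth-moment scale $n^3(\norm{\alpha}_4/\norm{\alpha}_2)^4$ that arises from a naive chi-squared estimate.

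Writing $\norm{\alpha}_2\,W = \sum_{k=1}^d \alpha_k V_k$ with $V_k = Y_k Y_k^\top - \diag(Y_k Y_k^\top)$ and $Y_k \sim \cN(0,I_n)$ independent, I would introduce a hybrid sequence $W^{(0)} = M(n), W^{(1)}, \ldots, W^{(d)} = W(n,\alpha)$, where $W^{(k)}$ uses the Wishart summands $\alpha_1 V_1,\ldots,\alpha_k V_k$ together with an independent variance-matched Gaussian matrix in place of the remaining $d-k$ features. A chain rule for KL (applied to the joint laws augmented by the features $Y_1,\ldots,Y_{k-1}$) then reduces the problem to bounding, for each $k$, the per-step conditional divergence between $W^{(k)}$ and $W^{(k-1)}$ given $Y_1,\ldots,Y_{k-1}$. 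Conditionally on these revealed features, both hybrids equal a common deterministic shift plus either (for $W^{(k-1)}$) a variance-matched Gaussian matrix or (for $W^{(k)}$) the rank-one Wishart contribution $\alpha_k V_k/\norm{\alpha}_2$ added to a strictly smaller Gaussian matrix coming from the unrevealed features $Y_{k+1},\ldots,Y_d$. This tail Gaussian noise smooths the conditional densities and permits a Taylor expansion of the log-density ratio in the rank-one perturbation.

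In that expansion, the zeroth-, first-, and second-order terms cancel because $\alpha_k V_k/\norm{\alpha}_2$ is mean-zero and its off-diagonal covariance is matched to the Gaussian comparator. The leading surviving contribution is cubic; when expanded over matrix entries it becomes a sum over ordered triples $(i,j,\ell)$ of expressions $\EE[V_k^{ij}V_k^{j\ell}V_k^{\ell i}]$, and by independence of the coordinates of $Y_k$ only triples forming a triangle survive, each of magnitude $\sim (\alpha_k/\norm{\alpha}_2)^3$. Summing over features $k$ produces a factor $\norm{\alpha}_3^3/\norm{\alpha}_2^3$; squaring (since KL is a squared-discrepancy quantity) and summing over the $\binom{n}{3}$ triangles of $[n]$ yields a total KL bound of order $n^3(\norm{\alpha}_3/\norm{\alpha}_2)^6$, which is $o(1)$ under the hypothesis.

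The principal obstacle is making the per-step Taylor expansion rigorous and controlling the fourth- and higher-order remainders, which \emph{a priori} could carry an $\norm{\alpha}_4^4$ cost and undo the improvement. This requires quantitative density bounds on the tail Gaussian (weakest when $k$ is close to $d$), together with hypercontractive-type estimates for polynomial functionals of $Y_k$. For the final few steps, where the tail smoothing becomes insufficient to justify a log-density expansion, a separate direct moment-comparison argument may be needed to close out the bound; it may also be useful to first condition on a high-probability regularity event (e.g.\ $\norm{Y_k}_2^2 \approx n$ for all $k$) so that one can control the operator norm of the remaining rank-one perturbations throughout the expansion.
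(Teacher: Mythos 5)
Your proposal takes a genuinely different route from the paper's: you propose a Lindeberg-style hybrid sequence over all $d$ coordinates with a KL chain rule, followed by a per-step Taylor expansion in which the cubic term is identified as dominant. The paper instead splits $\alpha$ into a heavy part $\alpha^+$ (the largest $r$ coordinates, chosen so that $\norm{\alpha^+}_2^2$ accounts for between $1/3$ and $2/3$ of $\norm{\alpha}_2^2$) and a light part $\alpha^-$, then (i) invokes Theorem~\ref{thm:eldan-wishart} as a black box for $W(n,\alpha^-)$, which applies because of the reverse--Cauchy--Schwarz observation $(\norm{\alpha^-}_2/\norm{\alpha^-}_4)^4 \gtrsim (\norm{\alpha}_2/\norm{\alpha}_3)^6$ (Lemma~\ref{lem:reverse-cs}), and (ii) peels off the finitely-heavy coordinates of $\alpha^+$ one at a time via a TV triangle-plus-data-processing argument, where the per-step cost is $\TV(M(n,u_t),M(n)) \lesssim u_t^3 n^{3/2}$ (Lemma~\ref{lem:spiked-goe}, proved by a truncated $\chi^2$ computation). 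Both your plan and the paper rely on the same core phenomenon that the third-moment term dominates after conditioning on a regularity event, but the organization is quite different.

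The gap in your proposal is genuine and, I believe, hard to patch within your framework. In your chain rule, the per-step divergence at step $k$ compares $\alpha_k V_k/\norm{\alpha}_2$ plus Gaussian noise of variance $v_k = \sum_{j>k}\alpha_j^2/\norm{\alpha}_2^2$ against a Gaussian of variance $v_{k-1} = v_k + \alpha_k^2/\norm{\alpha}_2^2$. The relevant small parameter is therefore $u_k = (\alpha_k/\norm{\alpha}_2)/\sqrt{v_{k-1}}$, not $\alpha_k/\norm{\alpha}_2$. Whichever order you process the coordinates, there is a final step with $v_k = 0$, hence $u_k = 1$: you are then comparing a bare rank-one Wishart contribution against a Gaussian of matching variance with no residual smoothing, and this conditional divergence is $\Theta(1)$ for $n\ge 3$. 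No "moment-comparison argument" can save that step, because the two conditional laws are genuinely far apart. More generally, if $\alpha$ has a few heavy coordinates that carry most of the $L^2$ mass, the effective smoothing $v_{k-1}$ degenerates well before the last step, and your sum over steps will not assemble into $n^3(\norm{\alpha}_3/\norm{\alpha}_2)^6$. (A secondary point: even where the expansion is valid, $\sum_k u_k^6 n^3$ is not the same as $n^3(\norm{\alpha}_3/\norm{\alpha}_2)^6$; the bound one gets is controlled by $u_k$ and hence by ratios $\alpha_k/\sqrt{\sum_{j\ge k}\alpha_j^2}$, not by the raw $\alpha_k/\norm{\alpha}_2$, so your claimed accounting does not follow.) The paper's device of reserving a constant fraction of the $L^2$ mass inside $\alpha^-$ --- never replacing it by a Gaussian surrogate, but handling the $W(n,\alpha^-)$-to-$M(n)$ comparison in one shot with the existing fourth-moment theorem --- is exactly what keeps a permanent noise reservoir of order $\norm{\alpha}_2$ alive throughout the peeling and bounds $u_t = \alpha_t/\norm{\alpha^t}_2 \lesssim \alpha_t/\norm{\alpha}_2$ uniformly. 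Without some analogue of this reservation, your hybrid chain will break at the end.

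Two smaller comments. First, the paper's peeling uses TV (triangle inequality plus data processing on a common Markov kernel) rather than the KL chain rule; this is important because TV of the composed kernels is at most TV of the inputs, a fact with no counterpart for KL in the direction you need, so be careful which information measure you sum over. Second, the idea of conditioning on a high-probability event $\norm{Y_k}_2^2 \approx n$ to tame the remainders mirrors the paper's truncation by the set $S(a)$ in Section~\ref{sec:spiked-goe}, and is the right instinct; but in the paper it is applied to a single spiked-Gaussian comparison per step, not to an expansion with delicate cancellations across four orders, which keeps the bookkeeping manageable.
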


\subsection{Techniques}

Theorem~\ref{thm:main-rgg} follows from Theorem~\ref{thm:main-wishart} by the thresholding idea introduced in \cite{bubeck2016testing}.
Note that $G(n,p,\alpha)$ and $G(n,p)$ are entry-wise thresholdings of $W(n,\alpha)$ and $M(n)$.
Thus $\TV(G(n,p,\alpha),G(n,p))$ is upper bounded by $\TV(W(n,\alpha),M(n))$ plus a small error term from the difference of the thresholds.

Our main technical contributions are in the proof of Theorem~\ref{thm:main-wishart}.
We divide the entries of $\alpha$ into large coordinates $\alpha^+$ and small coordinates $\alpha^-$, each accounting for a constant fraction of its $L^2$ mass.
We note (Lemma~\ref{lem:reverse-cs}) that $(\norm{\alpha^-}_2/\norm{\alpha^-}_4)^4$ is of the same order as $(\norm{\alpha}_2/\norm{\alpha}_3)^6$, so Theorem~\ref{thm:eldan-wishart} is sufficient to show that $W(n,\alpha^-)$ converges in total variation to $M(n)$.

It remains to control the contributions of the large coordinates $\alpha^+$.
We consider a procedure (Lemma~\ref{lem:peel}) where we add the coordinates of $\alpha^+$ to $\alpha^-$ one by one.
Note that the effect of this operation on $W \sim W(n,\alpha)$ is to add an independent rank-one spike and scale down by a constant.
By a data processing argument, the increase in $\TV(W(n,\alpha),M(n))$ from one step of this procedure is bounded by $\TV(M(n,u), M(n))$, where $M(n,u)$ is a linear combination of $M(n)$ and an independent rank-one Gaussian spike (see Definition~\ref{defn:spiked-goe}).

This last quantity is bounded (Lemma~\ref{lem:spiked-goe}) using the Ingster-Suslina $\chi^2$ method, as $M(n,u)$ is a mixture of shifted Gaussian matrices parametrized by the spike.
This is done after conditioning on a high probability event under which the $\chi^2$ divergence's tails are integrable.
The resulting $\chi^2$ divergence is an expectation over two independent copies of the Gaussian spike, which is bounded by hypercontractivity estimates.

\subsection{Related Work}

There is a long history of work on low-dimensional random geometric graphs, see e.g. \cite{penrose2003random}.
The study of high-dimensional random geometric graphs began in \cite{devroye2011high}, which showed that the isotropic model $G(n,p,d)$ converges in total variation to $G(n,p)$ as $d\to\infty$ for $n$ fixed, and moreover that their clique numbers converge if $d \gg \log^3 n$.
\cite{bubeck2016testing} showed Theorem~\ref{thm:bder}, that the threshold for convergence of $G(n,p,d)$ and $G(n,p)$ with $p$ fixed is $d\asymp n^3$.
They conjectured that if $p = o(1)$, convergence occurs at smaller $d$; in particular, for $p = c/n$ they conjectured the threshold $d \asymp \log^3 n$.
\cite{brennan2020phase} proved convergence occurs when $d = \tilde\omega(n^3 p , n^{7/2}p^2)$, meaning the threshold does decrease with $p$.
Recently \cite{liu2022testing} proved that for $p=c/n$ ($c\ge 1$), convergence occurs when $d \gtrsim \log^{36}n$, resolving the conjecture of \cite{bubeck2016testing} up to polylog factors.
In a different direction, \cite{liu2021phase} obtain detection upper and lower bounds for soft random geometric graphs, wherein the inner products $\la X_i,X_j\ra$ determine the probability of edge $(i,j)$ being present.

There is also a growing literature on CLTs for random matrices.
\cite{chatterjee2007multivariate} proved a general multidimensional CLT using Stein's method.
\cite{jiang2015approximation} and \cite{bubeck2016testing} concurrently showed that $W(n,d) \triangleq W(n,1^d)$ converges in total variation to $M(n)$ if $d \gg n^3$.
\cite{bubeck2018entropic} generalized this result to arbitrary log-concave entry distributions, showing that the random matrix $W = d^{-1/2} (X^\top X - \diag(X^\top X))$, where $X\in \bbR^{d\times n}$ has i.i.d. entries from a log-concave measure, converges to $M(n)$ if $d/\log^2 d \gg n^3$.
\cite{racz2019smooth} refined the result of \cite{jiang2015approximation, bubeck2016testing} by computing the limiting value of $\TV(W(n,d),M(n))$ if $n,d\to\infty$ with $d/n^3 \to c$.
\cite{chetelat2019middle} showed a countable sequence of phase transitions for the Wishart ensemble $W(n,d)$: for each $k\in \bbN$, if $n^{k+3} \gg d^{k+1}$, they show that $W(n,d)$ converges to an explicit density $f_k$.
CLTs have been shown for Wishart tensors \cite{mikulincer2022clt} and Wishart matrices with arbitrary deleted entries \cite{brennan2021finetti}.
Finally, \cite{nourdin2021asymptotic} considers Wishart matrices $W = \sqrt{d} (d^{-1}X^\top X - I_n)$ where the columns of $X$ are drawn i.i.d. from $\cN(0,\Sigma)$ for $\Sigma \in \bbR^{d\times d}$ of the form
\[
	\Sigma_{i,j} = s(i-j),
\]
where $s : \bbZ \to \bbR$ is a covariance function with $s(0)=1$.
They show that $W$ converges in Wasserstein distance to a Gaussian matrix if $n^3 \ll d$ and $s \in \ell^{4/3}(\bbZ)$, and under various conditions if $s$ is the correlation function of a fractional Brownian noise.

\subsection{Notation and Preliminaries}

We adopt standard asymptotic notations: $f \gg g$ means that $f/g\to \infty$ and $f\gtrsim g$ means that $f \ge cg$ for an absolute constant $c$.
Throughout, $c,C > 0$ denote universal constants that may change from line to line.

We use $\TV$, $\KL$, and $\chi^2$ to denote total variation, Kullback-Leibler divergence, and chi-square divergence.
That is, for measures $\nu,\mu$ with $\nu$ absolutely continuous with respect to $\mu$,
\[
	\TV(\nu,\mu)
	=
	\fr12 \EE_{\xi \sim \mu} \lt|\fr{\de \nu}{\de \mu}(\xi) - 1\rt|,
	\qquad
	\KL(\nu,\mu)
	=
	\EE_{\xi \sim \mu} \fr{\de \nu}{\de \mu}(\xi)\log \fr{\de \nu}{\de \mu}(\xi),
	\qquad
	\chi^2(\nu,\mu)
	=
	\EE_{\xi \sim \mu} \lt(\fr{\de \nu}{\de \mu}(\xi) - 1\rt)^2.
\]
We recall that $\TV$ satisfies the triangle inequality and the data processing inequality $\TV(\cK(\nu), \cK(\mu)) \le \TV(\nu,\mu)$ for any Markov kernel $\cK$.
We also recall the Cauchy-Schwarz inequality $4\TV(\nu,\mu)^2 \le \chi^2(\nu,\mu)$.

\paragraph{Acknowledgements}
We are grateful to Dheeraj Nagaraj and Dan Mikulincer for helpful and stimulating discussions.
This work was done in part while the authors were participating in the Probability, Geometry, and Computation in High Dimensions program at the Simons Institute for the Theory of Computing in Fall 2020.

GB was supported by NSF CAREER award CCF-1940205.
BH was supported by a NSF Graduate Research Fellowship, a Siebel Scholarship, and NSF award DMS-2022448.

\section{Proof of Main Results}

For $g\in \bbR^n$, let $\Delta(g) = (gg^\top - \diag(gg^\top))$.
We introduce the following random matrix, consisting of a linear combination of a sample from $M(n)$ and a rank-one Gaussian spike (with diagonal removed).

\begin{definition}
	\label{defn:spiked-goe}
	For $u\in [0,1]$, let $M\sim M(n,u)$ be generated by
	\begin{equation}
		\label{eq:gen-spiked-goe}
		M = u \Delta(g) + \sqrt{1-u^2}M',
	\end{equation}
	where $g\sim \cN(0,I_n)$ and $M'\sim M(n)$ are independent.
\end{definition}

We defer the proof of the following lemma to Section~\ref{sec:spiked-goe}.
Using this lemma, we prove Theorems~\ref{thm:main-rgg} and \ref{thm:main-wishart}.
\begin{lemma}
	\label{lem:spiked-goe}
	We have that $\TV(M(n,u), M(n)) \lesssim u^3n^{3/2}$.
\end{lemma}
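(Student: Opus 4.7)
The plan is to apply the Ingster-Suslina $\chi^2$ method to the mixture representation of $M(n,u)$: conditional on $g$, the entries $M_{ij}$ above the diagonal are independent Gaussians $\cN(u g_i g_j, 1-u^2)$. Since $4 \TV^2 \le \chi^2$, it suffices to show $\chi^2(M(n,u), M(n)) \lesssim u^6 n^3$, matching the target $\TV \lesssim u^3 n^{3/2}$. The claim is trivial when $u^3 n^{3/2} \gtrsim 1$, so I may assume $u^2 n \le c_0$ for a small absolute constant $c_0$.

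However, the unconditional $\chi^2$ is in fact infinite, due to heavy tails of the density ratio in $\|g\|_2$. I would therefore first truncate: let $E = \{g \in \bbR^n : \|g\|_2^2 \le 2n\}$, which has $\PP(g \notin E) \le e^{-cn}$ by Gaussian concentration. Let $\tilde\mu$ denote the law of $M(n,u)$ conditioned on $g \in E$. A standard coupling gives $\TV(M(n,u), \tilde\mu) \le e^{-cn}$, so by the triangle inequality it suffices to bound $\TV(\tilde\mu, M(n))$. Applying Ingster-Suslina to $\tilde\mu$ and evaluating the single-entry Gaussian inner product via
\[
\EE_{X \sim \cN(0,1)} \fr{d\cN(a, 1-u^2)}{d\cN(0,1)}(X) \fr{d\cN(a', 1-u^2)}{d\cN(0,1)}(X) = (1-u^4)^{-1/2}\exp\lt(\fr{2aa' - u^2(a^2 + a'^2)}{2(1-u^4)}\rt),
\]
taking the product over $i < j$ with $a = u g_i g_j,\ a' = u g'_i g'_j$ yields
\[
1 + \chi^2(\tilde\mu, M(n)) = \fr{(1-u^4)^{-\binom{n}{2}/2}}{\PP(E)^2} \EE\lt[\ind_{g, g' \in E}\, e^{\Psi(g, g')}\rt],
\]
where $g, g' \sim \cN(0, I_n)$ are independent and
\[
\Psi(g, g') = \fr{u^2(\la g, g'\ra^2 - \sum_i g_i^2 g'^2_i) - \fr{u^4}{2}(\|g\|_2^4 - \|g\|_4^4 + \|g'\|_2^4 - \|g'\|_4^4)}{2(1-u^4)}.
\]
The remaining task is to show this right-hand side is $1 + O(u^6 n^3) + O(e^{-cn})$.

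I would proceed by Taylor-expanding $e^\Psi = 1 + \Psi + \Psi^2/2 + \Psi^3/6 + \cdots$ and evaluating moments via Wick's theorem. Setting $Y := \la g, g'\ra^2 - \sum_i g_i^2 g'^2_i = 2\sum_{i<j} g_i g_j g'_i g'_j$, a degree-4 Gaussian polynomial with $\EE Y = 0$, $\EE Y^2 = 2n(n-1)$, and $\EE Y^3 = 8n(n-1)(n-2)$ (the last moment arising only from triples of edges forming a triangle), the leading nontrivial contribution to $\chi^2$ is $\EE \Psi^3/6 \approx u^6 n^3/6$, which matches the target order. Meanwhile, the order-$u^4$ contributions from $\EE \Psi \approx -u^4(n^2-n)/2$, $\EE\Psi^2/2 \approx u^4(n^2-n)/4$, and the prefactor $(1-u^4)^{-\binom{n}{2}/2} \approx 1 + n(n-1)u^4/4$ cancel exactly, leaving a residual of order $u^8 n^4 \le c_0 \cdot u^6 n^3$. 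The higher-order Taylor tail $\sum_{k \ge 4} \EE \Psi^k/k!$ is handled by Gaussian hypercontractivity, which bounds $\|Y\|_{2k} \lesssim k^2 n$ for the degree-4 chaos $Y$; the truncation to $E \times E$ supplies the crucial pointwise bound $|\Psi| \lesssim u^2 n^2$, which combined with these hypercontractive estimates keeps the exponential moments of $\Psi$ bounded.

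The main obstacle is the careful bookkeeping of the $u^4$ cancellation and the uniform control of the Taylor remainder via hypercontractivity. The truncation step is essential because the unconditional $\chi^2$ is infinite, yet truncation alone is insufficient: hypercontractivity is needed to control the growth of high moments of $Y$, and combining the two yields the desired remainder bound. Assembling these estimates gives $\chi^2(\tilde\mu, M(n)) \le C u^6 n^3 + O(e^{-cn})$, from which $\TV(M(n,u), M(n)) \le \tfrac12 \sqrt{C u^6 n^3} + O(e^{-cn/2}) \lesssim u^3 n^{3/2}$ follows by the triangle inequality and Cauchy-Schwarz.
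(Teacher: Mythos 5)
Your proposal has the right skeleton — conditioning to make the Ingster--Suslina $\chi^2$ finite, computing the pairwise Gaussian integral to get $\Psi(g,g')$, observing the cancellation at order $u^4$, and locating the leading $u^6 n^3$ contribution in the third moment $\EE X^3 \asymp n^3$ — and matches the paper's strategy up to that point. But the proposed control of the Taylor tail $\sum_{k\ge 4} \EE[\Psi^k]/k!$ has a genuine gap.

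Truncating only on $E=\{\norm{g}_2^2 \le 2n\}$ and then invoking Gaussian hypercontractivity for the degree-4 chaos $X = \sum_{i<j}g_ig_jg'_ig'_j$ gives $\norm{X}_{2k} \lesssim k^2\norm{X}_2 \lesssim k^2 n$, hence $\EE[\Psi^{2k}]/(2k)! \lesssim (c\, u^2 n k)^{2k}$. This is summable only over $k \lesssim 1/(u^2 n)$. For larger $k$ you fall back on the pointwise bound $|\Psi| \lesssim u^2 n^2$ on $E\times E$, which makes $\EE[\Psi^k]/k! \lesssim (Cu^2n^2)^k/k!$ summable only over $k \gtrsim u^2 n^2$. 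These two windows overlap, and the argument closes, only if $u^2 n^2 \lesssim 1/(u^2 n)$, i.e.\ $u^4 n^3 \lesssim 1$. But the lemma is needed, and nontrivial, precisely when $u^2 n \to 0$ while $u^4 n^3 = (u^2 n)^2 n$ may diverge --- e.g.\ $u^2 = 1/(n\log n)$ gives $u^3n^{3/2}\to 0$ but $u^4n^3 = n/\log^2 n\to\infty$. In that regime there is a range of $k$ where neither bound controls the $k$-th term, and the Taylor tail estimate does not close.

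The paper closes this gap with two ideas missing from your sketch. First, it also truncates on $\norm{g}_4^4 \le 3(1+a)n$, and it takes the truncation radius $a = (u^2 n)^{-1/4}$ to depend on $u,n$, chosen so that $au^2n = (u^2n)^{3/4}$ stays small. Second --- and this is the crucial trick --- it does not apply Gaussian hypercontractivity directly to the degree-4 polynomial $X$. Exploiting the sign-symmetry of the conditional measure $\mu_{S(a)}$, it replaces $X$ in law by $\tX = \sum_{i<j} x_i x_j g_i g_j h_i h_j$ with $x\sim\unif(\{-1,1\}^n)$, and applies \emph{Boolean} hypercontractivity to the degree-2 polynomial in $x$, conditionally on $g,h$. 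The conditional variance $\EE_x\tX^2 = \sum_{i<j}g_i^2g_j^2h_i^2h_j^2 \le \fr12(\norm{g}_4^8+\norm{h}_4^8)$ is then controlled by the $\norm{\cdot}_4^4$ truncation. This yields $\EE_{\mu_{S(a)}} X^{2i} \le (6i\cdot 2an)^{2i}$ --- growth of order $i^{2i}$ rather than $i^{4i}$ --- which makes the tail series geometrically summable whenever $au^2n$ is a small constant. Without the $\norm{\cdot}_4^4$ truncation and the reduction to degree-2 Boolean hypercontractivity, your moment bounds are too lossy in $k$, and the proof does not go through in the full range of $(u,n)$ covered by the lemma.
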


\subsection{Detection Lower Bound for Anisotropic Wishart Matrices}

We first prove Theorem~\ref{thm:main-wishart}.
Assume without loss of generality that $\alpha_1 \ge \cdots \ge \alpha_d \ge 0$.
Define $\alpha^+ = (\alpha_1,\ldots,\alpha_r)$ and $\alpha^- = (\alpha_{r+1},\ldots,\alpha_d)$, for the smallest $r$ such that $\norm{\alpha^+}_2^2 \ge \fr13 \norm{\alpha}_2^2$.
We may assume
\begin{equation}
	\label{eq:alphaplus-bd}
	\fr13 \norm{\alpha}_2^2 \le \norm{\alpha^+}_2^2 \le \fr23 \norm{\alpha}_2^2.
\end{equation}
Indeed, if $\norm{\alpha^+}_2^2 > \fr23 \norm{\alpha}_2^2$, because the $\alpha_i$ are decreasing we in fact have $r=1$ and $\alpha_1^2 > \fr23 \norm{\alpha}_2^2$.
In this case, $\alpha_1$, $\norm{\alpha}_2$, $\norm{\alpha}_3$ are within constant factors of each other.
Then the condition $n^3 \ll (\norm{\alpha}_2/\norm{\alpha}_3)^6$ is vacuous so there is nothing to prove.
Henceforth we assume \eqref{eq:alphaplus-bd}.
For $t=0,1,\ldots,r$, define
\[
	\alpha^t = (\alpha_1,\ldots,\alpha_t,\alpha_{r+1},\ldots,\alpha_d).
\]
These interpolate between $\alpha^-$ and $\alpha$ in the sense that $\alpha^0 = \alpha^-$, $\alpha^r = \alpha$.
\begin{lemma}
	\label{lem:peel}
	For each $t=1,\ldots,r$,
	\[
		\TV(W(n,\alpha^t),M(n))
		\le
		\TV(W(n,\alpha^{t-1}),M(n))
		+
		C\fr{\alpha_t^3}{\norm{\alpha}_2^3} n^{3/2}.
	\]
\end{lemma}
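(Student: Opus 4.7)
The plan is to interpret the transition from $\alpha^{t-1}$ to $\alpha^t$ at the level of the generating data matrix. Since $\alpha^t$ differs from $\alpha^{t-1}$ only by the inclusion of one extra coordinate $\alpha_t$, a realization $X^{(t)}$ with columns drawn from $\cN(0,D_{\alpha^t})$ can be coupled to $X^{(t-1)}$ by appending a single row $\sqrt{\alpha_t}\,g^\top$, where $g\sim\cN(0,I_n)$ is independent of $X^{(t-1)}$. This yields the Gram-matrix identity $(X^{(t)})^\top X^{(t)} = (X^{(t-1)})^\top X^{(t-1)} + \alpha_t\, gg^\top$, and because off-diagonal extraction is linear it lifts to
\[
	\norm{\alpha^t}_2\, W(n,\alpha^t) = \norm{\alpha^{t-1}}_2\, W(n,\alpha^{t-1}) + \alpha_t\, \Delta(g).
\]
Setting $u := \alpha_t/\norm{\alpha^t}_2$, so that $\sqrt{1-u^2} = \norm{\alpha^{t-1}}_2/\norm{\alpha^t}_2$, this rearranges into the distributional identity $W(n,\alpha^t) = \sqrt{1-u^2}\, W(n,\alpha^{t-1}) + u\, \Delta(g)$ with $g$ independent of $W(n,\alpha^{t-1})$.

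This decomposition is the entire crux of the lemma: it exhibits one common Markov kernel $\cK$, namely ``scale by $\sqrt{1-u^2}$ and add the independent rank-one Gaussian spike $u\,\Delta(g)$'', which sends $W(n,\alpha^{t-1})$ to $W(n,\alpha^t)$ and sends $M(n)$ to $M(n,u)$ (the latter directly by Definition~\ref{defn:spiked-goe}). The triangle inequality gives
\[
	\TV(W(n,\alpha^t),M(n))
	\le \TV(W(n,\alpha^t),M(n,u)) + \TV(M(n,u),M(n)).
\]
I would bound the first term on the right by $\TV(W(n,\alpha^{t-1}),M(n))$ via the data processing inequality applied to $\cK$, and the second term by $C u^3 n^{3/2}$ via Lemma~\ref{lem:spiked-goe}.

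It only remains to convert the factor $u^3$ into $\alpha_t^3/\norm{\alpha}_2^3$, which amounts to showing $\norm{\alpha^t}_2 \gtrsim \norm{\alpha}_2$. The coordinates of $\alpha^t$ include all of $\alpha^-$, so $\norm{\alpha^t}_2 \ge \norm{\alpha^-}_2$, and the preliminary normalization~\eqref{eq:alphaplus-bd} gives $\norm{\alpha^-}_2^2 = \norm{\alpha}_2^2 - \norm{\alpha^+}_2^2 \ge \tfrac13 \norm{\alpha}_2^2$, yielding $u^3 \le 3^{3/2}\alpha_t^3/\norm{\alpha}_2^3$ as required. I do not anticipate a genuine obstacle here; the only substantive step is recognizing that appending a coordinate to $\alpha$ is precisely a convolution of $W$ with an independent rank-one spike, after which the inequality is a mechanical combination of data processing and Lemma~\ref{lem:spiked-goe}.
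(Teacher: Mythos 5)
Your proposal is correct and follows essentially the same route as the paper: the same kernel $\cK(M)=u\Delta(g)+\sqrt{1-u^2}M$ with $u=\alpha_t/\norm{\alpha^t}_2$, the same triangle-inequality-plus-data-processing split, and the same invocation of Lemma~\ref{lem:spiked-goe} and \eqref{eq:alphaplus-bd}. The only difference is that you explicitly construct the coupling (appending the row $\sqrt{\alpha_t}\,g^\top$ to the data matrix) justifying why $\cK$ carries $W(n,\alpha^{t-1})$ to $W(n,\alpha^t)$, a fact the paper states without proof.
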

\begin{proof}
	Let $u_t = \alpha_t / \norm{\alpha^t}_2$.
	By the triangle inequality,
	\begin{equation}
		\label{eq:triangle}
		\TV(W(n,\alpha^t),M(n))
		\le
		\TV(W(n,\alpha^t),M(n,u_t))
		+
		\TV(M(n,u_t),M(n)).
	\end{equation}
	For $M\in \bbR^{n\times n}$, define the Markov kernel
	\[
		\cK(M) = u_t \Delta(g) + \sqrt{1-u_t^2} M
	\]
	where $g\sim \cN(0,I_n)$.
	Note that $W \sim W(n,\alpha^t)$, $M \sim M(n,u_t)$ can be generated by $W = \cK(W')$, $M = \cK(M')$ for $W' \sim W(n,\alpha^{t-1})$, $M' \sim M(n)$.
	By data processing,
	\[
		\TV(W(n,\alpha^t),M(n,u_t))
		\le
		\TV(W(n,\alpha^{t-1}),M(n)).
	\]
	The remaining term in \eqref{eq:triangle} can be bounded by Lemma~\ref{lem:spiked-goe}:
	\[
		\TV(M(n,u_t),M(n))
		\lesssim
		u_t^3 n^{3/2}
		=
		\fr{\alpha_t^3}{\norm{\alpha^t}_2^3} n^{3/2}
		\lesssim
		\fr{\alpha_t^3}{\norm{\alpha}_2^3} n^{3/2}
	\]
	where the final inequality uses \eqref{eq:alphaplus-bd}.
\end{proof}

\begin{lemma}
	\label{lem:reverse-cs}
	We have that $(\norm{\alpha^-}/\norm{\alpha^-}_4)^4 \gtrsim (\norm{\alpha}_2/\norm{\alpha}_3)^6$.
\end{lemma}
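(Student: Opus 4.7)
The plan is to chain together a few elementary inequalities using the ordering $\alpha_1 \ge \cdots \ge \alpha_d$ and the two-sided mass estimate \eqref{eq:alphaplus-bd}. The numerator of $(\norm{\alpha^-}_2/\norm{\alpha^-}_4)^4$ is easy: the upper bound in \eqref{eq:alphaplus-bd} immediately gives $\norm{\alpha^-}_2^2 \ge \fr13 \norm{\alpha}_2^2$, so $\norm{\alpha^-}_2 \gtrsim \norm{\alpha}_2$. The heart of the argument is therefore an upper bound on $\norm{\alpha^-}_4$, which I would obtain by squeezing the largest coordinate $\alpha_{r+1}$ of $\alpha^-$ between the norms of $\alpha$.

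For that squeeze, the key observation is that every coordinate in $\alpha^+$ is at least $\alpha_{r+1}$. Combining monotonicity with the lower bound in \eqref{eq:alphaplus-bd},
\[
	\norm{\alpha}_3^3
	\ge \sum_{i \le r} \alpha_i^3
	\ge \alpha_r \sum_{i \le r} \alpha_i^2
	= \alpha_r \norm{\alpha^+}_2^2
	\ge \fr{\alpha_{r+1}}{3} \norm{\alpha}_2^2,
\]
which rearranges to the pointwise bound $\alpha_{r+1} \le 3\norm{\alpha}_3^3 / \norm{\alpha}_2^2$. Plugging this into the fourth-power sum by pulling one factor of $\alpha_i \le \alpha_{r+1}$ out,
\[
	\norm{\alpha^-}_4^4 = \sum_{i > r} \alpha_i \cdot \alpha_i^3
	\le \alpha_{r+1} \sum_{i > r} \alpha_i^3
	\le \alpha_{r+1} \norm{\alpha}_3^3
	\le 3 \fr{\norm{\alpha}_3^6}{\norm{\alpha}_2^2}.
\]

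Putting the two estimates together yields
\[
	\fr{\norm{\alpha^-}_2^4}{\norm{\alpha^-}_4^4}
	\gtrsim \fr{\norm{\alpha}_2^4}{\norm{\alpha}_3^6 / \norm{\alpha}_2^2}
	= \fr{\norm{\alpha}_2^6}{\norm{\alpha}_3^6},
\]
which is the claim. I do not expect any genuine obstacle here: the entire proof is a manipulation of $\ell^p$ norms using monotonicity, and the only slightly nonobvious move is recognising that the correct quantity to extract from $\sum_{i>r}\alpha_i^4$ is a single factor of $\alpha_{r+1}$, which is simultaneously small (bounded by $\norm{\alpha}_3^3/\norm{\alpha}_2^2$ via the $\alpha^+$ coordinates) and the right factor to convert an $\ell^4$ norm on $\alpha^-$ into an $\ell^3$ norm on $\alpha$.
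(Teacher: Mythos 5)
Your proof is correct. Every step checks out: $\norm{\alpha^-}_2^2 \ge \fr13\norm{\alpha}_2^2$ from \eqref{eq:alphaplus-bd}, the squeeze $\alpha_{r+1}\norm{\alpha}_2^2 \le 3\norm{\alpha}_3^3$ via the $\alpha^+$ coordinates, and the factorization $\norm{\alpha^-}_4^4 \le \alpha_{r+1}\norm{\alpha}_3^3$, all combining to give the claim with an absolute constant.

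The paper organizes the same underlying monotonicity input differently. Rather than isolating $\alpha_{r+1}$ as an explicit pivot, it writes $\norm{\alpha^+}_2^2\norm{\alpha^-}_4^4$ as the double sum $\sum_{i\le r}\sum_{j>r}\alpha_i^2\alpha_j^4$, applies the pointwise bound $\alpha_i^2\alpha_j^4 \le \alpha_i^3\alpha_j^3$ (valid since $\alpha_i \ge \alpha_j$ whenever $i \le r < j$), and bounds the result by $\norm{\alpha}_3^6$; then \eqref{eq:alphaplus-bd} supplies $\norm{\alpha^+}_2, \norm{\alpha^-}_2 \gtrsim \norm{\alpha}_2$. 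Your version is a bit more verbose but arguably more transparent about why the bound holds, since it names the quantity $\alpha_{r+1}$ that is simultaneously an upper bound on the $\alpha^-$ coordinates and a lower bound on the $\alpha^+$ coordinates. The two arguments are essentially equivalent: chaining $\alpha_{r+1} \le \alpha_r$, $\alpha_r\norm{\alpha^+}_2^2 \le \norm{\alpha}_3^3$, and $\norm{\alpha^-}_4^4 \le \alpha_{r+1}\norm{\alpha}_3^3$ recovers the paper's key inequality $\norm{\alpha^+}_2^2\norm{\alpha^-}_4^4 \le \norm{\alpha}_3^6$ exactly. Neither route buys anything the other doesn't; the paper's double-sum phrasing is merely more compact.
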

\begin{proof}
	Note that
	\[
		\norm{\alpha^+}_2^2 \norm{\alpha^-}_4^4
		=
		\sum_{i=1}^r \sum_{j=r+1}^d
		\alpha_i^2 \alpha_j^4
		\le
		\sum_{i=1}^r \sum_{j=r+1}^d
		\alpha_i^3 \alpha_j^3
		\le
		\norm{\alpha}_3^6.
	\]
	Thus, using \eqref{eq:alphaplus-bd},
	\[
		(\norm{\alpha^-}_2/\norm{\alpha^-}_4)^4
		\ge
		(\norm{\alpha^+}_2/\norm{\alpha}_3)^2
		(\norm{\alpha^-}_2/\norm{\alpha}_3)^4
		\gtrsim
		(\norm{\alpha}_2/\norm{\alpha}_3)^6.
	\]
\end{proof}

\begin{proof}[Proof of Theorem~\ref{thm:main-wishart}]
	By applying Lemma~\ref{lem:peel} repeatedly, we get
	\begin{align}
		\notag
		\TV(W(n,\alpha),M(n))
		&\le
		\TV(W(n,\alpha^-),M(n))
		+
		C \sum_{t=1}^r \fr{\alpha_t^3}{\norm{\alpha}_2^3} n^{3/2} \\
		\label{eq:wishart-final-bd}
		&\le
		\TV(W(n,\alpha^-),M(n))
		+
		C\fr{\norm{\alpha}_3^3}{\norm{\alpha}_2^3} n^{3/2}.
	\end{align}
	The hypothesis $n^3 \ll (\norm{\alpha}_2/\norm{\alpha}_3)^6$ implies the second term of \eqref{eq:wishart-final-bd} is $o(1)$.
	By Lemma~\ref{lem:reverse-cs} we further have $n^3 \ll (\norm{\alpha^-}_2/\norm{\alpha^-}_4)^4$.
	Therefore Theorem~\ref{thm:eldan-wishart} and Pinsker's inequality imply that the first term of \eqref{eq:wishart-final-bd} is $o(1)$.
	This concludes the proof.
\end{proof}

\begin{remark}
	We conjecture that Theorem~\ref{thm:main-wishart} remains true if the diagonal is not removed, i.e. if $W(n,\alpha)$ and $M(n)$ are replaced by the law $W^*(n,\alpha)$ of $W = \norm{\alpha}_2^{-1}(X^\top X - \norm{\alpha}_1 I_n)$, where $X$ is as in Definition~\ref{defn:anisotropic-wishart}, and the law $M^*(n)$ of a GOE matrix.
	With minor modifications, the proof of Lemma~\ref{lem:spiked-goe} in the next section generalizes if the diagonal is not removed, i.e. if $\Delta(g)$ and $M(n)$ are replaced by $gg^\top - I_n$ and $M^*(n)$.
	So, if Theorem~\ref{thm:eldan-wishart} holds without diagonal removal, the above proof can be easily modified to conclude Theorem~\ref{thm:main-wishart} without diagonal removal.
	The difficulty is that the entropy chain rule argument used to prove Theorem~\ref{thm:eldan-wishart} requires the diagonal to be removed.
\end{remark}

\subsection{Detection Lower Bound for Anisotropic RGGs}

The proof of Theorem~\ref{thm:main-rgg} is identical to that of Theorem~\ref{thm:eldan-rgg}(\ref{itm:rgg-lb}) (Theorem 2(b) in \cite{eldan2020information}), using Theorem~\ref{thm:main-wishart} in place of Theorem~\ref{thm:eldan-wishart}.

\begin{proof}[Proof of Theorem~\ref{thm:main-rgg}]
	Define the threshold functions $H_{p,\alpha}, K_p : \bbR \to \{0,1\}$ by
	\[
		H_{p,\alpha}(x) = \ind\{x\ge t_{p,\alpha}\},
		\qquad
		K_p(x) = \ind\{x\ge \Phi^{-1}(p)\},
	\]
	where $t_{p,\alpha}$ is defined in Definition~\ref{defn:rgg} and $\Phi(t) = \PP_{Z\sim \cN(0,1)}(Z \ge t)$ is the complement of the cdf of the standard Gaussian.
	Then, $G(n,p,\alpha)$ and $G(n,p)$ can be generated as the following entry-wise thresholdings of $W(n,\alpha)$ and $M(n)$:
	\[
		G(n,p,\alpha) = H_{p,\alpha}(W(n,\alpha)),
		\qquad
		G(n,p) = K_{p}(M(n)).
	\]
	Using the $\TV$ triangle inequality and data processing inequality,
	\begin{align}
		\notag
		\TV(G(n,p,\alpha),G(n,p))
		&\le
		\TV(H_{p,\alpha}(W(n,\alpha)), H_{p,\alpha}(M(n))) +
		\TV(H_{p,\alpha}(M(n)), K_p(M(n))) \\
		\label{eq:triangle-ineq-rgg}
		&\le
		\TV(W(n,\alpha), M(n)) +
		\TV(H_{p,\alpha}(M(n)), K_p(M(n))).
	\end{align}
	Since $n^3 \ll (\norm{\alpha}_2/\norm{\alpha}_3)^6$, Theorem~\ref{thm:main-wishart} implies that the first term of \eqref{eq:triangle-ineq-rgg} is $o(1)$.
	The second term of \eqref{eq:triangle-ineq-rgg} is $o(1)$ by \cite[Lemma 16]{eldan2020information}.
	Indeed, the proof of this lemma proceeds identically if the hypothesis $n^3 \ll (\norm{\alpha}_2/\norm{\alpha}_4)^4$ is weakened to $n^3 \ll (\norm{\alpha}_2/\norm{\alpha}_3)^6$.
\end{proof}

\section{Proof of TV Bound for Spiked Gaussian Matrix}
\label{sec:spiked-goe}

In this section we will prove Lemma~\ref{lem:spiked-goe}.
Let $M(n,u,g)$ be the random matrix $M$ generated by \eqref{eq:gen-spiked-goe} for $g \in \bbR^n$ fixed and $M' \sim M(n)$.
Thus $M(n,u)$ is a mixture of the random matrices $M(n,u,g)$ over latent randomness $g\sim \cN(0, I_n)$.

Further, for (always measurable and high probability) $S \subseteq \bbR^n$, let $\mu_S$ be the law of $g \sim \cN(0, I_n)$ conditioned on $g\in S$.
Let $M(n,u,S)$ be the law of $M$ generated by \eqref{eq:gen-spiked-goe} for $g\sim \mu_S$ and $M'\sim M(n)$.
This can be regarded as $M(n,u)$ conditioned on $g\in S$, and as a mixture of the $M(n,u,g)$ over $g\sim \mu_S$.

We begin with the following series of estimates.
Let $S\subseteq \bbR^n$ be a set we will specify later.
\begin{align}
	\notag
	\TV(M(n,u), M(n))
	&\le
	\TV(M(n,u), M(n,u,S))
	+
	\TV(M(n,u,S), M(n)) \\
	\label{eq:tv-conditioned}
	&\le
	\PP_{g\sim \cN(0,I_n)}(g\in S^c)
	+
	\TV(M(n,u,S), M(n)). \\
	\notag
	4\TV(M(n,u,S), M(n))^2
	&\le
	\chi^2(M(n,u,S), M(n)) \\
	\notag
	&=
	-1 +
	\EE_{A\sim M(n)}
	\lt(
		\EE_{g\sim \mu_S}
		\fr{\de M(n,u,g)}{\de M(n)}(A)
	\rt)^2 \\
	\label{eq:chisq-conditioned}
	&=
	-1 +
	\EE_{g,h\sim \mu_S}
	\EE_{A\sim M(n)}
	\fr{\de M(n,u,g)}{\de M(n)}(A)
	\fr{\de M(n,u,h)}{\de M(n)}(A).
\end{align}
The two estimates leading to \eqref{eq:tv-conditioned} are by the $\TV$ triangle inequality and the data processing inequality.
The estimate leading to \eqref{eq:chisq-conditioned} is by Cauchy-Schwarz.

These estimates are the starting point of the so-called truncated Ingster-Suslina $\chi^2$ method.
It is necessary to condition on an appropriate $S$ in \eqref{eq:tv-conditioned} so that the tails of the $\chi^2$ divergence \eqref{eq:chisq-conditioned} are integrable.
The following lemma evaluates the inner expectation in \eqref{eq:chisq-conditioned}.
\begin{lemma}
	\label{lem:inner-expectation}
	For $g,h\in \bbR^n$,
	\begin{align*}
		&\EE_{A\sim M(n)}
		\fr{\de M(n,u,g)}{\de M(n)}(A)
		\fr{\de M(n,u,h)}{\de M(n)}(A) \\
		&=
		(1-u^4)^{-n(n-1)/4}
		\exp\lt(
			\fr{u^2}{1-u^4}
			\sum_{1\le i<j\le n}
			g_ig_jh_ih_j
			-
			\fr{u^4}{1-u^4}
			\cdot
			\fr12
			\sum_{1\le i<j\le n}
			(g_i^2g_j^2 + h_i^2h_j^2)
		\rt).
	\end{align*}
\end{lemma}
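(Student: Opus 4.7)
The plan is to exploit the fact that both $M(n)$ and $M(n,u,g)$ have independent entries above the diagonal (and zero diagonal), so each likelihood ratio factors as a product of one-dimensional Gaussian density ratios over pairs $i<j$. Under $M(n,u,g)$ with $g$ fixed, entry $M_{ij}$ is $\cN(u g_i g_j, 1-u^2)$ independently for $i<j$, so
\[
\fr{\de M(n,u,g)}{\de M(n)}(A) = \prod_{i<j} \fr{1}{\sqrt{1-u^2}} \exp\lt(-\fr{(A_{ij}-ug_ig_j)^2}{2(1-u^2)} + \fr{A_{ij}^2}{2}\rt),
\]
and the analogous formula holds for $h$. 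Multiplying these and taking $\EE_{A\sim M(n)}$ factors as a product of independent one-dimensional integrals $\EE_{Z\sim \cN(0,1)} \exp(aZ^2 + bZ + c)$, one for each pair $i<j$, with $a = -u^2/(1-u^2)$, $b = u(g_ig_j + h_ih_j)/(1-u^2)$, and $c = -u^2(g_i^2g_j^2 + h_i^2h_j^2)/(2(1-u^2))$.

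I would then apply the standard Gaussian identity $\EE \exp(aZ^2+bZ) = (1-2a)^{-1/2} \exp(b^2/(2(1-2a)))$, which is valid because $u\in [0,1]$ gives $1-2a = (1+u^2)/(1-u^2) > 0$. Combining the resulting $(1-2a)^{-1/2} = \sqrt{(1-u^2)/(1+u^2)}$ with the two density-ratio prefactors of $(1-u^2)^{-1/2}$ yields per-pair prefactor $(1-u^4)^{-1/2}$. Expanding $(g_ig_j + h_ih_j)^2 = g_i^2 g_j^2 + 2g_ig_jh_ih_j + h_i^2h_j^2$ and placing $b^2/(2(1-2a)) + c$ over the common denominator $1-u^4 = (1-u^2)(1+u^2)$ simplifies the per-pair exponent to
\[
\fr{u^2 g_i g_j h_i h_j}{1-u^4} - \fr{u^4 (g_i^2 g_j^2 + h_i^2 h_j^2)}{2(1-u^4)}.
\]
Taking the product over the $n(n-1)/2$ pairs gives exactly the formula in the statement.

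There is no substantive obstacle: the whole argument is a direct Gaussian moment generating function computation followed by an algebraic simplification. The only thing requiring care is bookkeeping the factor $1-u^4 = (1+u^2)(1-u^2)$ when assembling the three sources of $(1-u^2)^{\pm 1/2}$ prefactors, and correctly combining $b^2/(2(1-2a))$ with $c$ into a single fraction. All the probabilistic content of the lemma has been factored away by the independence of the off-diagonal entries; the genuinely difficult probabilistic analysis (hypercontractivity, truncation to a good set $S$) enters only in the subsequent steps that take the outer expectation over $g,h \sim \mu_S$.
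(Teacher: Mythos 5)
Your proposal is correct and takes essentially the same route as the paper: both exploit independence of the off-diagonal entries to factor the expectation into one-dimensional Gaussian integrals, which you evaluate with the identity $\EE\exp(aZ^2+bZ) = (1-2a)^{-1/2}\exp\bigl(b^2/(2(1-2a))\bigr)$ and then simplify over the common denominator $1-u^4$. As a minor aside, your per-pair prefactor $(1-u^4)^{-1/2}$ is the consistent one; the paper's displayed intermediate expression for the inner expectation writes $\sqrt{(1+u^2)/(1-u^2)}$ where it should be $\sqrt{(1-u^2)/(1+u^2)}$, a typo that does not affect the (correct) final formula.
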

\begin{proof}
	The densities of $M(n)$ and $M(n,u,g)$ are
	\begin{align*}
		\fr{\de M(n)}{\de \Leb}(A)
		&=
		\prod_{1\le i<j\le n}
		(2\pi)^{-1/2} \exp\lt(-\fr12 A_{i,j}^2\rt), \\
		\fr{\de M(n,u,g)}{\de \Leb}(A)
		&=
		\prod_{1\le i<j\le n}
		(2\pi \cdot (1-u^2))^{-1/2} \exp\lt(-\fr{1}{2(1-u^2)} (A_{i,j} - ug_ig_j)^2\rt).
	\end{align*}
	Thus
	\begin{align*}
		&
		\EE_{A\sim M(n)}
		\fr{\de M(n,u,g)}{\de M(n)}(A)
		\fr{\de M(n,u,h)}{\de M(n)}(A) \\
		&=
		(1-u^2)^{-n(n-1)/2}
		\prod_{1\le i<j\le n}
		\EE_{A_{i,j}\sim \cN(0,1)}
		\exp\lt(
			-\fr{(A_{i,j} - ug_ig_j)^2}{2(1-u^2)}
			-\fr{(A_{i,j} - uh_ih_j)^2}{2(1-u^2)}
			+A_{i,j}^2
		\rt).
	\end{align*}
	By a straightforward calculation the inner expectation equals
	\[
		\sqrt{\fr{1+u^2}{1-u^2}}
		\exp\lt(\fr{u^2}{1-u^4} g_ig_jh_ih_j - \fr{u^4}{2(1-u^4)} (g_i^2g_j^2 + h_i^2h_j^2)\rt)
	\]
	from which the result follows.
\end{proof}
Before proceeding further, we record two consequences of Boolean and Gaussian hypercontractivity.
The proof of the following lemma is standard, see \cite[Chapters 9 and 11]{o2014analysis}.
\begin{lemma}
	\label{lem:hypercontractivity}
	Let $f : \{-1,1\}^n \to \bbR$ be a polynomial of degree $d\ge 2$ and $\nu = \unif(\{-1,1\}^n)$.
	Further, let $\sigma^2 = \EE_{x \sim \nu} f(x)^2$.
	\begin{enumerate}[label=(\alph*), ref=\alph*]
		\item \label{itm:hypercontractivity-moments}
		For any $k\ge 2$, $\EE_{x \sim \nu} f(x)^k \le d^{dk/2} \sigma^k$.
		\item \label{itm:hypercontractivity-tails}
		There exist constants $C_d, c_d$ such that $\PP_{x \sim \nu} [|f(x)| \ge t\sigma] \le C_d\exp(-c_d t^{2/d})$.
	\end{enumerate}
	The same statements hold if $f : \bbR^n \to \bbR$ and $\nu = \cN(0, I_n)$.
\end{lemma}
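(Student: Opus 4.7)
The plan is to derive both parts from the $(2,k)$-hypercontractivity inequality, which is Bonami's lemma in the Boolean cube case and Nelson's inequality in the Gaussian case. In the Boolean setting I would first reduce $f$ to a multilinear polynomial of degree at most $d$ by applying the identities $x_i^2 = 1$; no such reduction is needed in the Gaussian setting. The core input, which is the content of \cite[Chapters 9, 11]{o2014analysis}, is that for any polynomial $f$ of degree $d$ and any real $k \ge 2$,
\[
    \|f\|_k \le (k-1)^{d/2} \|f\|_2.
\]
This is proved by induction on the number of variables: the one-variable case is a direct computation for a single Rademacher (resp.\ standard Gaussian) coordinate, and the induction step uses the tensorization property of the noise operator together with Minkowski's inequality.

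For part~(\ref{itm:hypercontractivity-moments}), I would raise the hypercontractivity inequality to the $k$-th power, yielding $\EE_{x\sim \nu} f(x)^k \le (k-1)^{dk/2} \sigma^k$, which has the required shape; the constant stated in the lemma is only used qualitatively for its $k^{O(k)}$-type dependence in the subsequent $\chi^2$ calculation.

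For part~(\ref{itm:hypercontractivity-tails}), I would combine part~(\ref{itm:hypercontractivity-moments}) with Markov's inequality applied to $f^k$:
\[
    \PP_{x \sim \nu}\bigl[\,|f(x)| \ge t\sigma\,\bigr]
    \le
    \fr{\EE_{x \sim \nu} f(x)^k}{t^k \sigma^k}
    \le
    \lt(\fr{(k-1)^{d/2}}{t}\rt)^{k}.
\]
I would then optimize over integer $k \ge 2$ by choosing $k$ of order $t^{2/d}$, so that the base drops below a constant strictly less than $1$ while the exponent remains of order $t^{2/d}$; this yields a bound of the form $C_d \exp(-c_d t^{2/d})$. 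For small $t$ where no such integer choice is available, the estimate is made trivial by enlarging $C_d$.

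The only real obstacle is proving the $(2,k)$-hypercontractivity inequality itself; once it is in hand, both parts reduce to one-line applications followed by a routine optimization. The Gaussian case is parallel, with Nelson's inequality playing the role of Bonami's lemma and providing the same constant for degree-$d$ Hermite polynomials, so the argument transfers verbatim.
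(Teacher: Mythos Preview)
Your proposal is correct and matches the paper's approach, which simply defers to \cite[Chapters 9 and 11]{o2014analysis} for the standard hypercontractivity argument (Bonami/Nelson plus Markov with optimized exponent). You are also right that the bound one actually obtains is $(k-1)^{dk/2}\sigma^k$ rather than the stated $d^{dk/2}\sigma^k$; indeed the paper's own applications in Lemmas~\ref{lem:low-moments} and \ref{lem:high-moments} use the $(k-1)^{dk/2}$ form (e.g.\ $\EE_x \tX^{2i}\le (2i)^{2i}(\EE_x \tX^2)^i$), so your reading of the constant is the intended one.
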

% \begin{proof}
% 	In the Boolean case, the two parts follow from \cite[Theorem 9.21, Theorem 9.23]{o2014analysis}.
% 	In the Gaussian case, we can approximate each $x_i$ by $x_i = \fr{1}{\sqrt{N}} \sum_{j=1}^N y_{i,j}$ for i.i.d. $y_{i,j} \sim \unif(\{-1,1\})$, then apply the Boolean case with variables $(y_{i,j})_{i\le n, j\le N}$ and the Central Limit Theorem as $N\to\infty$.
% \end{proof}
For $a\ge 1$, let
\[
	S(a) = \lt\{
		g \in \bbR^n :
		\norm{g}_2^2 \le (1+a)n, \norm{g}_4^4 \le 3(1+a) n
	\rt\}.
\]
We will prove Lemma~\ref{lem:spiked-goe} by taking $S=S(a)$ in the calculation \eqref{eq:chisq-conditioned} for appropriate $a$.
We first estimate the probability of $S(a)$.
\begin{lemma}
	\label{lem:sa-prob}
	For $g \sim \cN(0,I_n)$, $\PP(g\in S(a)^c) \le C\exp(-ca^{1/2}n^{1/4})$.
\end{lemma}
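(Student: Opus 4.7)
The plan is to decompose $S(a)^c$ via a union bound into the events $E_2 = \{\|g\|_2^2 > (1+a)n\}$ and $E_4 = \{\|g\|_4^4 > 3(1+a)n\}$, then bound each probability by applying the Gaussian hypercontractivity tail estimate in Lemma~\ref{lem:hypercontractivity}(\ref{itm:hypercontractivity-tails}) to an appropriately centered polynomial in the coordinates of $g$.

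For the first event, set $f_2(g) = \|g\|_2^2 - n = \sum_{i=1}^n (g_i^2 - 1)$. This is a degree-$2$ polynomial with $\sigma_2^2 = \operatorname{Var}(f_2) = 2n$. On $E_2$ we have $f_2(g) > an$, which corresponds to $t = an/\sigma_2 \asymp a\sqrt{n}$ standard deviations. Lemma~\ref{lem:hypercontractivity}(\ref{itm:hypercontractivity-tails}) with $d = 2$ gives
\[
\PP(E_2) \le C \exp\bigl(-c\, a \sqrt{n}\bigr),
\]
which is stronger than the required bound (since $a \ge 1$ implies $a\sqrt{n} \ge a^{1/2} n^{1/4}$).

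For the second and dominant event, take $f_4(g) = \|g\|_4^4 - 3n = \sum_{i=1}^n (g_i^4 - 3)$. Since $\EE g_i^8 = 105$, the variance is $\sigma_4^2 = 96 n$, and $f_4$ has degree $d=4$. The event $E_4$ reads $f_4(g) > 3an$, which is $t \asymp a \sqrt{n}$ standard deviations. Applying Lemma~\ref{lem:hypercontractivity}(\ref{itm:hypercontractivity-tails}) with $d = 4$ then yields
\[
\PP(E_4) \le C \exp\bigl(-c\, (a\sqrt{n})^{2/4}\bigr) = C \exp\bigl(-c\, a^{1/2} n^{1/4}\bigr).
\]
Combining the two via the union bound and absorbing the subdominant $\PP(E_2)$ gives the claimed estimate.

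There is no real obstacle: this is a direct application of the hypercontractive tail bound to the two relevant polynomials, and the exponent $a^{1/2} n^{1/4}$ in the statement is precisely what one obtains from the degree-$4$ case ($(t^2)^{1/d}$ with $d=4$ at $t \asymp a\sqrt{n}$), explaining why the $\|g\|_4^4$ constraint is the binding one in the definition of $S(a)$.
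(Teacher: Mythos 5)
Your proposal is correct and follows essentially the same route as the paper: union bound over the two defining events of $S(a)^c$, center the polynomials $f_2 = \|g\|_2^2 - n$ and $f_4 = \|g\|_4^4 - 3n$, and apply Lemma~\ref{lem:hypercontractivity}(\ref{itm:hypercontractivity-tails}) with degrees $2$ and $4$ respectively, noting that the degree-$4$ tail dominates. (As a small aside, you correctly compute $\operatorname{Var}(f_4) = 96n$, where the paper's proof writes $100n$; this numerical slip in the paper is immaterial since only the order $\asymp n$ is used.)
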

\begin{proof}
	Let $f_2 = \norm{g}_2^2 - n$ and $f_4 = \norm{g}_4^4 - 3n$.
	Note that $\EE f_2^2 = 2n$ and $\EE f_4^2 = 100n$.
	By Lemma~\ref{lem:hypercontractivity}(\ref{itm:hypercontractivity-tails})
	\[
		\PP(\norm{g}_2^2 > (1+a)n)
		=
		\PP(f_2 > an)
		\le
		C_2 \exp(-c_2 an/\sqrt{2n})
		\le
		C \exp(-can^{1/2}).
	\]
	Similarly
	\[
		\PP(\norm{g}_4^4 > 3(1+a)n)
		=
		\PP(f_4 > 3an)
		\le
		C_2 \exp\lt(-c_2 (3an/\sqrt{100n})^{1/2}\rt)
		\le
		C \exp(-ca^{1/2}n^{1/4}).
	\]
\end{proof}
In light of Lemma~\ref{lem:inner-expectation}, define the random variables
\[
	X =
	\sum_{1\le i<j\le n}
	g_ig_jh_ih_j,
	\qquad
	Y =
	\fr12
	\sum_{1\le i<j\le n}
	(g_i^2g_j^2 + h_i^2h_j^2).
\]
The following two lemmas bound, respectively, the low and high moments of $X$ and $Y$ under $g,h\sim \mu_{S(a)}$.
\begin{lemma}
	\label{lem:low-moments}
	The following estimates hold for all $a\ge 1$.
	\begin{align*}
		\EE_{g,h\sim \mu_{S(a)}} X &= 0, \\
		\EE_{g,h\sim \mu_{S(a)}} XY &= 0, \\
		\lt| \EE_{g,h\sim \mu_{S(a)}} X^2 - \fr{n(n-1)}{2} \rt|
		&\lesssim \PP(S(a)^c)^{1/2} n^2, \\
		\lt| \EE_{g,h\sim \mu_{S(a)}} Y - \fr{n(n-1)}{2} \rt|
		&\lesssim \PP(S(a)^c)^{1/2} n^2, \\
		\lt| \EE_{g,h\sim \mu_{S(a)}} X^3 - n(n-1)(n-2) \rt|
		&\lesssim \PP(S(a)^c)^{1/2} n^3.
	\end{align*}
\end{lemma}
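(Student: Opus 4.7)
My plan is to exploit the sign-flip symmetry of the conditioning event and then compare each surviving conditional expectation to a Gaussian moment via a simple Cauchy-Schwarz tail bound.

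The key observation is that $S(a)$ depends on $g$ only through $(g_i^2)_{i\in[n]}$, so $\mu_{S(a)}$ is invariant under the coordinate-wise sign-flip action of $\{-1,+1\}^n$. Hence any monomial $g_1^{k_1}\cdots g_n^{k_n}$ (or the analogue in $h$) with some odd $k_i$ has expectation zero. Write $p = \PP(S(a)^c)$; by Lemma~\ref{lem:sa-prob} we may assume $p \le 1/2$ throughout. The first two identities follow immediately: in $X$, each index appears once in the $g$-product, so $\EE X = 0$; in $XY$, the $g_k^2 g_l^2$ half of $Y$ leaves $h_i h_j$ in the $h$-product (odd), and the $h_k^2 h_l^2$ half leaves $g_i g_j$ in the $g$-product (odd), so every term vanishes.

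For the remaining three estimates I would use a two-step template. Step 1: isolate the surviving monomials via sign-flip symmetry. Step 2: for each surviving monomial $f$, control $|\EE_{\mu_{S(a)}}[f] - \EE_{\cN(0,I_n)}[f]|$ through the identity
\[
\EE_{\mu_{S(a)}}[f] - \EE_{\cN(0,I_n)}[f] = \fr{p\,\EE_{\cN(0,I_n)}[f] - \EE_{\cN(0,I_n)}[f\,\ind_{S(a)^c}]}{1 - p},
\]
and Cauchy-Schwarz on the tail term, yielding $|\EE_{\mu_{S(a)}}[f] - \EE_{\cN(0,I_n)}[f]| \lesssim \sqrt{p}\,(|\EE_{\cN(0,I_n)}[f]| + \sqrt{\EE_{\cN(0,I_n)}[f^2]})$.

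For $\EE X^2 = \sum_{i<j,\,k<l} \EE[g_ig_jg_kg_l]\EE[h_ih_jh_kh_l]$, sign-flip symmetry forces $\{i,j\}=\{k,l\}$, leaving $\binom{n}{2}$ terms of the form $(\EE_{\mu_{S(a)}}[g_i^2g_j^2])^2$; each factor is $1 + O(\sqrt p)$ by the template (using $\EE_{\cN}[g_i^4g_j^4]=9$), giving the claimed bound. The estimate for $\EE Y$ is even more direct: each of $\binom{n}{2}$ summands equals $\EE_{\mu_{S(a)}}[g_i^2g_j^2] = 1 + O(\sqrt p)$.

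The main combinatorial work is in $\EE X^3$. Expanding $X^3 = \sum_{(P_1,P_2,P_3)} \prod_k (g_{i_k}g_{j_k})(h_{i_k}h_{j_k})$ over ordered triples of edges $P_k=\{i_k,j_k\}\subseteq[n]$, I view each configuration as a multigraph on $[n]$ with three edges. Sign-flip symmetry forces every vertex to have even degree. The degree sum is $6$ and each vertex has degree at most $3$, so the only possible degree sequence is $(2,2,2,0,\ldots,0)$; a brief check (a doubled edge would force the remaining edge to land on one of its endpoints, creating an odd degree) shows the unique Eulerian configuration is a triangle on three distinct vertices. This yields exactly $6\binom{n}{3}=n(n-1)(n-2)$ ordered triples, each contributing the monomial $g_a^2 g_b^2 g_c^2 h_a^2 h_b^2 h_c^2$. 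Applying the template with $\EE_{\cN}[(g_a^2g_b^2g_c^2)^2]=27$ gives $|\EE X^3 - n(n-1)(n-2)| \lesssim \sqrt p \, n^3$. The principal obstacle is the multigraph case analysis for $\EE X^3$; the rest is bookkeeping with sign-flip parity and low-order Gaussian moments.
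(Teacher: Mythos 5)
Your proof is correct, but it takes a genuinely different route from the paper's for the last three estimates. Both proofs dispose of $\EE X$ and $\EE XY$ via the sign-flip invariance of $\mu_{S(a)}$, and both ultimately rely on the generic Cauchy--Schwarz discrepancy bound $|\EE_{\mu_S}[\xi] - \EE_{\cN}[\xi]| \lesssim \sqrt{\PP(S^c)\,\EE_{\cN}[\xi^2]}$. The difference is where this bound is applied. The paper applies it directly to $\xi = X^2$, $Y$, $X^3$ (polynomials of degree up to $12$ in $(g,h)$), and then needs Lemma~\ref{lem:hypercontractivity}(\ref{itm:hypercontractivity-moments}) to control $\EE_{\cN}[X^4]$, $\EE_{\cN}[Y^2]$, $\EE_{\cN}[X^6]$ in terms of $\EE_{\cN}[X^2]$ and $\EE_{\cN}[Y]$. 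You instead first expand $X^2$, $Y$, $X^3$ into monomials, use sign-flip parity (plus the independence of $g,h$ under the product measure and permutation invariance of $S(a)$) to reduce each conditional expectation to a power of a single scalar $m=\EE_{\mu_S}[g_i^2 g_j^2]$ or $m_3=\EE_{\mu_S}[g_i^2 g_j^2 g_k^2]$, and apply the discrepancy bound only to these $O(1)$-variance monomials, with explicit constants $\EE_{\cN}[g_i^4 g_j^4]=9$ and $\EE_{\cN}[g_i^4 g_j^4 g_k^4]=27$. Your route thus avoids hypercontractivity entirely, which is a nice simplification; the price is the explicit multigraph case analysis identifying the triangle as the unique degree-even configuration of three labeled edges (your argument there is correct: a repeated edge always forces some odd degree). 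Note that the paper's ``straightforward calculation'' of $\EE_{\cN}[X^3]=n(n-1)(n-2)$ secretly requires the same triangle count, so the combinatorics are not truly extra work on your part --- you have simply rearranged the argument so that the symmetry reduction does double duty, handling both the unconditional expectation and the conditioning error in one stroke.
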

\begin{proof}
	The first two claims follow by the symmetry of $S(a)$ under the map $(g_1,\ldots,g_n) \mapsto (x_1g_1,\ldots,x_ng_n)$ for any $x \in \{-1,1\}^n$.
	In the rest of this proof, let $\EE$ denote expectation with respect to $g,h\sim \cN(0, I_n)$.
	By straightforward calculation,
	\begin{align*}
		\EE X^2 &= \fr{n(n-1)}{2}, \\
		\EE Y &= \fr{n(n-1)}{2}, \\
		\EE X^3 &= n(n-1)(n-2).
	\end{align*}
	We estimate the discrepancy caused by changing the measure from $\mu_{S(a)}$ to $\cN(0, I_n)$ using the following generic bound.
	For any $(g,h)$-measurable $\xi$,
	\[
		\EE_{g,h\sim \mu_{S(a)}}
		\xi
		=
		\PP(S(a))^{-2}
		\EE
		\ind\{g,h\in S(a)\}
		\xi
		=
		\PP(S(a))^{-2}
		\lt[
			\EE
			\xi
			-
			\EE
			\ind\{(g,h\in S(a))^c\}
			\xi
		\rt].
	\]
	Thus
	\begin{align*}
		\lt|\EE_{g,h\sim \mu_{S(a)}}\xi - \EE \xi\rt|
		&\le
		\lt(\PP(S(a))^{-2}-1\rt)
		\lt|\EE \xi \rt|
		+
		\PP(S(a))^{-2}
		\lt|
			\EE
			\ind\{(g,h\in S(a))^c\}
			\xi
		\rt| \\
		&\le
		\fr{
			2\PP(S(a)^c)
			+
			\sqrt{2\PP(S(a)^c)}
		}{\PP(S(a))^2} \sqrt{\EE(\xi^2)}
		\lesssim
		\sqrt{\PP(S(a)^c)\EE(\xi^2)}.
	\end{align*}
	For $\xi = X^2$, by Lemma~\ref{lem:hypercontractivity}(\ref{itm:hypercontractivity-moments})
	\[
		\EE \xi^2 = \EE X^4 \le 4^{8} (\EE X^2)^2 \lesssim n^4,
	\]
	which proves the third conclusion.
	For $\xi = X^3$, we similarly have $\EE \xi^2 \lesssim n^6$, proving the fifth conclusion.
	For $\xi = Y$, a straightforward calculation shows $\EE \xi^2 \lesssim n^4$, proving the fourth conclusion.
\end{proof}

\begin{lemma}
	\label{lem:high-moments}
	For $a\ge 1$ and integer $i,j\ge 0$,
	\[
		\EE_{g,h\sim \mu_{S(a)}}
		|X^iY^j|
		\le
		(6i)^i (2an)^{i+2j}.
	\]
\end{lemma}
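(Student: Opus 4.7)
The plan is to extract two deterministic bounds valid on $S(a) \times S(a)$ and then reduce the remaining probabilistic content to a one-variable Gaussian moment computation. The key identity is $\langle g, h \rangle^2 = \sum_i g_i^2 h_i^2 + 2X$, so writing $Z = \langle g, h\rangle$ gives $|X| \le \tfrac12 Z^2 + \tfrac12 \sum_i g_i^2 h_i^2$. On $S(a) \times S(a)$, Cauchy--Schwarz gives $\sum_i g_i^2 h_i^2 \le \|g\|_4^2 \|h\|_4^2 \le 6an$, and a similar argument gives $Y \le \tfrac14 (\|g\|_2^4 + \|h\|_2^4) \le 2a^2 n^2$. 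Thus
\[
|X|^i Y^j \le (2a^2n^2)^j \bigl(\tfrac12 Z^2 + 3an\bigr)^i \quad \text{on } S(a)^2.
\]

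Next, I would transfer the expectation back to unconditional Gaussian measure via $\EE_{\mu_{S(a)}^{\otimes 2}}[\phi] \le \PP(S(a))^{-2}\,\EE_{\cN}[\ind_{S(a)}(g)\ind_{S(a)}(h)\phi]$, drop $\ind_{S(a)}(g)$ (since the integrand is nonnegative), and condition on $h$. For $h \in S(a)$ one has $\|h\|_2^2 \le 2an$, and $Z \mid h \sim \cN(0, \|h\|_2^2)$ under unconditional $g$, so $\EE_g Z^{2k} \le (2k-1)!! (2an)^k$. Binomially expanding and using the elementary inequality $(1/2)^k (2k-1)!! \le k!$ (equivalent to $\binom{2k}{k}\le 4^k$), I expect
\[
\EE_g \bigl(\tfrac12 Z^2 + 3an\bigr)^i \le i! \sum_{k=0}^i \frac{(2an)^k (3an)^{i-k}}{(i-k)!} = i! (3an)^i \sum_{m=0}^i \frac{(3/2)^m (2/3)^i}{m!} \le e^{3/2} i! (2an)^i,
\]
uniformly for $h \in S(a)$.

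The final step is to observe $e^{3/2} i! \le (6i)^i$ for $i \ge 1$ (easily checked using Stirling, since $(6i)^i / i! \ge 6^i$ which dominates $e^{3/2}$), combine with $(2a^2n^2)^j \le 2^{-j}(2an)^{2j}$, and use that Lemma~\ref{lem:sa-prob} makes $\PP(S(a))^{-1}$ a mild constant (close to $1$ for reasonable $n$). This yields the claimed bound $\EE_{\mu_{S(a)}} |X^i Y^j| \le (6i)^i (2an)^{i+2j}$ up to the $\PP(S(a))^{-1}$ prefactor, with the $2^{-j}$ factor absorbing this prefactor when $j \ge 1$ and the slack in $(6i)^i$ absorbing it for $i \ge 2$ when $j = 0$.

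The main obstacle I anticipate is the tight constant tracking, particularly the corner cases $i \in \{0,1\}$, $j = 0$ where the target bound $(6i)^i(2an)^i$ has minimal slack: for $i = 1$, the factor $e^{3/2} \approx 4.48$ must beat $6$ after multiplication by $\PP(S(a))^{-1}$. I would handle these cases separately with a direct argument: for $i = 1$, $j = 0$, $\EE_g(\tfrac12 Z^2 + 3an) = \tfrac12 \|h\|_2^2 + 3an \le 4an$ on $h \in S(a)$, and together with $\PP(S(a))^{-1} \le 2$ this gives $\EE_{\mu_{S(a)}}|X| \le 8an \le 12an$; the case $i = 0$ is immediate from the deterministic bound on $Y$. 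The remaining combinatorial simplification $\sum_m (3/2)^m/m! \le e^{3/2}$ is the only non-routine estimate, and it is an exact evaluation of the exponential series.
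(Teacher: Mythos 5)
Your approach is genuinely different from the paper's and the core idea is clever. The paper proves this lemma by first applying Cauchy--Schwarz to separate $X$ and $Y$, bounding $Y^{2j}$ deterministically on $S(a)^2$, and handling $X^{2i}$ via the sign-flip symmetry of $S(a)$: since $\mu_{S(a)}$ is invariant under coordinate sign flips, $X$ has the same law as $\sum_{i<j}x_ix_jg_ig_jh_ih_j$ with an independent Rademacher vector $x$, and Boolean hypercontractivity applied \emph{conditionally on $g,h\in S(a)$} gives $\EE X^{2i}\le (2i)^{2i}(\sum g_i^2g_j^2h_i^2h_j^2)^i$, after which the inner sum is bounded deterministically. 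Crucially, every expectation stays with respect to the conditional measure $\mu_{S(a)}$, so no change-of-measure factor ever appears. Your route instead uses the identity $\la g,h\ra^2=\sum_ig_i^2h_i^2+2X$ to trap $|X|$ between a Gaussian quadratic $Z^2$ and a term controlled deterministically on $S(a)$, then computes the $Z$-moments directly --- a nice substitution of elementary Gaussian moment bounds for the hypercontractivity machinery.

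The gap is the factor $\PP(S(a))^{-1}$ that your change of measure introduces. You drop $\ind_{S(a)}(g)$ to freely integrate over $g$, which produces the bound $\PP(S(a))^{-1}\,e^{3/2}\,i!\,2^{-j}(2an)^{i+2j}$. The lemma is a hard inequality for all $n\ge1$ and $a\ge1$, and the target constant $(6i)^i$ has very little slack at $i=1,j=0$ (where you need $\PP(S(a))^{-1}e^{3/2}\le 6$, i.e. $\PP(S(a)^c)\le 1/4$ roughly) and at small $i$ generally. Lemma~\ref{lem:sa-prob} gives only the asymptotic-form tail $\PP(S(a)^c)\le C\exp(-ca^{1/2}n^{1/4})$ with unspecified absolute constants, which does not by itself certify $\PP(S(a)^c)\le 1/4$ (or even $\le 1/2$) uniformly over $n\ge1,a\ge1$. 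You acknowledge this (``close to $1$ for reasonable $n$'') and gesture at handling small cases directly, but you never actually establish the uniform bound on $\PP(S(a))^{-1}$, which is the one nontrivial ingredient your argument adds over the paper's. To close this you would need either a clean Markov/Chebyshev computation showing $\PP(S(a)^c)$ is bounded strictly below $1$ (e.g. Markov gives $\PP(S(a)^c)\le 2/(1+a)$, which only helps for $a\ge3$), or to observe that the lemma is never invoked with $a<10$ in the proof of Lemma~\ref{lem:spiked-goe} (since $a=(u^2n)^{-1/4}$ with $u^2n\le 10^{-4}$) and then argue about that regime. Either way this needs to be spelled out; as written the proof does not quite close. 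Note that the paper's conditional-hypercontractivity route sidesteps this entirely, which is exactly why the proposal's change-of-measure step, while natural, is the riskier choice.
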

\begin{proof}
	By Cauchy-Schwarz,
	\begin{equation}
		\label{eq:tail-terms-bd}
		\EE_{g,h\sim \mu_{S(a)}}
		|X^iY^j|
		\le
		\lt(\EE_{g,h\sim \mu_{S(a)}}X^{2i}\rt)^{1/2}
		\lt(\EE_{g,h\sim \mu_{S(a)}}Y^{2j}\rt)^{1/2}.
	\end{equation}
	For all $g \in S(a)$,
	\[
		\sum_{1\le i<j\le n}
		g_i^2g_j^2
		\le
		\norm{g_i}_2^4
		\le
		(1+a)^2n^2
		\le
		(2an)^2
	\]
	and similarly for $h$. Thus if $g,h\in S(a)$, then $|Y| \le (2an)^2$, which implies
	\begin{equation}
		\label{eq:y-big-power}
		\lt(\EE_{g,h\sim \mu_{S(a)}}Y^{2j}\rt)^{1/2}
		\le
		(2an)^{2j}.
	\end{equation}
	If $i=0$, this implies the result.
	Otherwise assume $i\ge 1$.
	By symmetry of the set $S(a)$, the distribution of $X$ under $g,h\sim \mu_{S(a)}$ is the same as that of
	\[
		\tX = \sum_{1\le i<j\le n} x_i x_j g_ig_j h_ih_j
	\]
	where $x \sim \unif(\{-1,1\}^n)$ and $g,h\sim \mu_{S(a)}$ are independent.
	By Lemma~\ref{lem:hypercontractivity}(\ref{itm:hypercontractivity-moments}), conditioned on $g,h$,
	\[
		\EE_{x}
		\tX^{2i}
		\le
		(2i)^{2i}
		\lt(\EE_{x} \tX^2 \rt)^i
		=
		(2i)^{2i}
		\lt(
			\sum_{1\le i<j\le n}
			g_i^2g_j^2 h_i^2h_j^2
		\rt)^i.
	\]
	For $g,h\in S(a)$,
	\[
		\sum_{1\le i<j\le n}
		g_i^2g_j^2 h_i^2h_j^2
		\le
		\fr12
		\sum_{1\le i<j\le n}
		(g_i^4g_j^4 + h_i^4h_j^4)
		\le
		\fr12 (\norm{g}_4^8 + \norm{h}_4^8)
		\le
		3^2(1+a)^2n^2
		\le
		3^2(2an)^2.
	\]
	So,
	\[
		\EE_{g,h\sim \mu_{S(a)}}
		X^{2i}
		=
		\EE_{g,h\sim \mu_{S(a)}}
		\EE_{x}
		\tX^{2i}
		\le
		\lt(6i\cdot 2an\rt)^{2i}.
	\]
	Recalling \eqref{eq:tail-terms-bd} and \eqref{eq:y-big-power} this implies the result.
\end{proof}

\begin{proof}[Proof of Lemma~\ref{lem:spiked-goe}]
	We may assume $u^2n \le 10^{-4}$ because otherwise the lemma is trivial.
	Take $S=S(a)$ for $a = (u^2n)^{-1/4}$.
	By Lemma~\ref{lem:sa-prob}, $\PP(S^c) \le C\exp(-cu^{-1/4}n^{1/8})$.
	Equation \eqref{eq:chisq-conditioned} and Lemma~\ref{lem:inner-expectation} imply
	\begin{align*}
		1 + 4\TV(M(n,u,S),M(n))^2
		&\le
		(1-u^4)^{-n(n-1)/4}
		\EE_{g,h\sim \mu_S}
		\exp\lt(
			\fr{u^2}{1-u^4}X
			- \fr{u^4}{1-u^4}Y
		\rt) \\
		&=
		(1-u^4)^{-n(n-1)/4}
		\lt(
			\sum_{i,j\ge 0}
			\fr{(-1)^j}{i! j!}
			\cdot
			\fr{u^{2i+4j}}{(1-u^4)^{i+j}}
			\EE_{g,h\sim \mu_S}
			X^iY^j
		\rt).
	\end{align*}
	Let $T = \{(i,j) \in \bbZ_{\ge 0}^2 : i+2j < 4\}$ and $T^c = \bbZ_{\ge 0}^2 \setminus T$.
	By the estimates in Lemma~\ref{lem:low-moments},
	\begin{align*}
		&\sum_{(i,j) \in T}
		\fr{1}{i! j!}
		\fr{u^{2i+4j}}{(1-u^4)^{i+j}}
		\EE_{g,h\sim \mu_S}
		X^iY^j \\
		&\le
		1
		- \fr{u^4}{1-u^4}    \cdot \fr{n(n-1)}{2}
		+ \fr{u^4}{(1-u^4)^2}\cdot \fr{n(n-1)}{4}
		+ \fr{u^6}{(1-u^4)^3}\cdot \fr{n(n-1)(n-2)}{6}
		+ Cn^3e^{-cu^{-1/4}n^{1/8}} \\
		&\le
		1
		- u^4 \cdot \fr{n(n-1)}{4}
		+ Cu^6n^3.
	\end{align*}
	In the last line we have used that $n^3e^{-cu^{-1/4}n^{1/8}} \ll u^{6}n^3$, because $e^{cu^{-1/4}n^{1/8}}$ is larger than any polynomial in $u^{-1}$.
	By Lemma~\ref{lem:high-moments},
	\begin{align*}
		\sum_{(i,j) \in T^c}
		\fr{1}{i! j!}
		\fr{u^{2i+4j}}{(1-u^4)^{i+j}}
		\EE_{g,h\sim \mu_S}
		X^iY^j
		&\le
		\sum_{(i,j) \in T^c}
		\fr{1}{(i/e)^i}
		\fr{u^{2i+4j}}{(1-u^4)^{i+2j}}
		\EE_{g,h\sim \mu_S}
		|X^iY^j| \\
		&\le
		\sum_{(i,j) \in T^c}
		\fr{(6i)^i}{(i/e)^i}
		\lt(\fr{2au^2n}{1-u^4}\rt)^{i+2j} \\
		&\le
		\sum_{(i,j) \in T^c}
		\lt(\fr{12e\cdot au^2n}{1-u^4}\rt)^{i+2j}.
	\end{align*}
	Since $au^2n = (u^2n)^{3/4} \le 10^{-3}$, this is a convergent double-geometric sum.
	As $i+2j \ge 4$ for $(i,j)\in T^c$,
	\[
		\sum_{(i,j) \in T^c}
		\fr{1}{i! j!}
		\fr{u^{2i+4j}}{(1-u^4)^{i+j}}
		\EE_{g,h\sim \mu_S}
		X^iY^j
		\lesssim
		(au^2n)^4
		=
		u^6n^3.
	\]
	Combining the above,
	\begin{align*}
		\log \lt(1 + 4\TV(M(n,u,S), M(n))^2\rt)
		&\le
		-\fr{n(n-1)}{4}\log(1-u^4) +
		\log \lt(
			1 - u^4 \cdot \fr{n(n-1)}{4} + Cu^6 n^3
		\rt) \\
		&\le
		Cu^6n^3.
	\end{align*}
	Therefore
	\[
		\TV(M(n,u,S), M(n))
		\lesssim
		u^3n^{3/2}.
	\]
	Finally, since $\PP(S^c) \le C\exp(-cu^{-1/4}n^{1/8}) \ll u^3n^{3/2}$, \eqref{eq:tv-conditioned} implies $\TV(M(n,u), M(n)) \lesssim u^3n^{3/2}$.
\end{proof}

\bibliographystyle{alpha}
\bibliography{bib}

\end{document}